\colorlet{MyBlue}{DodgerBlue!60!Black}
\colorlet{MyGreen}{DarkGreen!60!Black}
\definecolor{darkred}{rgb}{1,0,0}
\definecolor{darkolivegreen}{rgb}{0.33, 0.42, 0.18}
\definecolor{darkgray}{rgb}{0.66, 0.66, 0.66}
\newcommand {\rosso}[1] {{\color{black}{#1}}}
\newcommand {\viola}[1] {{\color{black}{#1}}}
\numberwithin{equation}{section}  %numberwithin goes before cleverefs when using hyperref
\crefname{app}{Appendix}{Appendices}
\theoremstyle{plain}
\newtheorem{theorem}{Theorem}
\newtheorem{corollary}[theorem]{Corollary}
\newtheorem*{corollary*}{Corollary}
\newtheorem{lemma}[theorem]{Lemma}
\newtheorem{proposition}[theorem]{Proposition}
\theoremstyle{definition}
\newtheorem*{definition*}{Definition}
\newtheorem*{hypothesis*}{Hypothesis}
\theoremstyle{remark}
\newtheorem{remark}[theorem]{Remark}
\newtheorem*{remark*}{Remark}
\newtheorem*{notation*}{Notational remark}
\numberwithin{theorem}{section}
\DeclarePairedDelimiterX{\braket}[2]{\langle}{\rangle}{#1,#2}
\DeclarePairedDelimiterX{\inner}[2]{\langle}{\rangle}{#1,#2}
\DeclarePairedDelimiterX{\setdef}[2]{\{}{\}}{#1:#2}
\newcommand{\ind}{\mathds{1}}
\newcommand{\cX}{\ensuremath{\mathcal X}}
\newcommand{\E}{\ensuremath{\mathbb{E}}}
\newcommand{\C}{\ensuremath{\mathbb{C}}}
\newcommand{\N}{\ensuremath{\mathbb{N}}}
\newcommand{\R}{\ensuremath{\mathbb{R}}}
\renewcommand{\P}{\ensuremath{\mathbb{P}}}
\newcommand{\pl}{\ensuremath{\left\langle}}
\newcommand{\pr}{\ensuremath{\right\rangle}}
\def\({\left(}
\def\){\right)}
\def\[{\left[}
\def\]{\right]}
\newacro{NE}{Nash equilibrium}
\newacro{PNE}{pure Nash equilibrium}
\newacro{MNE}{mixed Nash equilibrium}
\newacro{PFNE}{prior-free Nash equilibrium}
\newacro{WE}{Wardrop equilibrium}
\newacro{SO}{socially optimum}
\newacro{SU}{social utility}
\newacro{BEq}{best equilibrium}
\newacro{WEq}{worst equilibrium}
\newacro{KKT}{Karush\textendash Kuhn\textendash Tucker}
\newacro{OD}[O/D]{origin-destination}
\newacro{PoA}{price of anarchy}
\newacro{PoS}{price of stability}
\newacro{PoCS}{price of correlated stability}
\newacro{BPR}{bureau of public roads}
\newacro{FIP}{finite improvement property}
\newacro{CLT}{central limit theorem}
\newacro{BPG}{buck-passing game}
\newacro{SBPG}{stochastic buck-passing game}
\newacro{MBPG}{mixed extension of the buck-passing game}
\begin{document}
\title[A probabilistic proof of Cooper\&Frieze's {\em First Visit Time Lemma}]{A probabilistic proof of Cooper\&Frieze's \\``First Visit Time Lemma''}
\author[F.~Manzo]{Francesco Manzo$^{*}$}
\address{$^{*}$ Dipartimento di Matematica e Fisca, Universit\`a di Roma Tre, Largo San Leonardo Murialdo 1, 00146 Roma, Italy.}
\email{manzo.fra@gmail.com}
\author[M.~Quattropani]{Matteo Quattropani$^{\dagger}$}
\address{$^{\dagger}$ Dipartimento di Economia e Finanza, LUISS, Viale Romania 32, 00197 Roma, Italy.}
%\email{matteo.quattropani@uniroma3.it}
\email{mquattropani@luiss.it}
\author[E.~Scoppola]{Elisabetta Scoppola$^{\#}$}
\address{$^{\#}$ Dipartimento di Matematica e Fisca, Universit\`a di Roma Tre, Largo San Leonardo Murialdo 1, 00146 Roma, Italy.}
\email{scoppola@mat.uniroma3.it}
%\urladdr{\url{http://www.there.com}}
\maketitle              % typeset the header of the contribution

\begin{abstract} In this short note we present an alternative proof of the so-called \emph{First Visit Time Lemma} (FVTL), originally presented by Cooper and Frieze in its first formulation in \cite{CF1}, and then used and refined in a list of papers by Cooper, Frieze and coauthors. We work in the original setting, considering a growing sequence of irreducible Markov chains on $n$ states. We assume that the chain is rapidly mixing and with a stationary measure having no entry which is too small nor too large. Under these assumptions, the FVTL shows the exponential decay of the distribution of the hitting time of a given state $x$---for the chain started at stationarity---up to a small multiplicative correction. While the proof of the FVTL presented by Cooper and Frieze is based on tools from complex analysis, 
and it requires an additional assumption on a generating function, we present a completely probabilistic proof, relying on the theory of quasi-stationary distributions and on strong-stationary times arguments. In addition, under the same set of assumptions, we provide some quantitative control on the Doob's transform of the chain on the complement of the state $x$.
\end{abstract}

\section{Introduction}\label{sec:intro}
In the early 00's, Cooper and Frieze started a series of papers on which they compute the first order asymptotics of the cover time of random walks on different random graphs, see \cite{CF2,CF3,CF4,CF5,CF6,CF8,CF9}. Given an arbitrary (possibly directed) graph structure, the cover time is the expected time needed by a simple random walk to visit every vertex of the graph, maximized over all the possible starting positions. 
One of the key ingredients of Cooper and Freze's analysis is the so called \emph{First Visit Time Lemma (FVTL)}, as named by the authors in \cite{CF1}.
The same lemma has been of use in proving also different kind of results, e.g., to estimate expected meeting time of multiple random walks on random graphs, see \cite{CFRmultiple}. The lemma deals with the tail probability of the stopping time $\tau_x$, i.e., the time of the first visit to the state $x$. Consider a sequence of Markov chains on a growing state space of size $n$. We assume that for every sufficiently large $n$ the chain is irreducible, admitting a unique invariant measure $\pi=\pi_n$. The framework of the lemma is based on two additional crucial assumptions relating mixing time and spread of the stationary measure, namely, we assume the existence of a time $T=T_n$ such that
\begin{equation}\label{hp1}
\max_{x,y}\left|P^T(x,y)-\pi(y) \right|=O \(\frac{1}{n^3}\),
\end{equation}
and
\begin{equation}\label{hp2}
T\:\max_{x}\pi(x)=o(1),\qquad \min_x\pi(x)=\omega(n^{-2}).
\end{equation}

Under the latter assumptions and 
adding a technical requirement on the generating function of the recurrences to a fixed state $x$, 
the authors show that \rosso{starting from any state} $y$ and for all $t>T$:
\begin{equation}
\qquad \P_y\(\text{the process does not visit $x$ in the interval $[T,t]$ }\)\sim \(1-\frac{\pi(x)}{R_T(x)}\)^{t},
\end{equation}
where $R_T(x)\ge 1$ is the expected number of returns in $x$ within the mixing time $T$.
The proof of the latter results, as well as the underlying technical assumptions, evolved with their uses since the first formulation in \cite{CF1} to the last (to the best of our knowledge) formulation and proof in \cite{CF4}. 
\textcolor{black}{We remark that the assumptions in \cref{hp1,hp2} are typically satisfied by random walks on many models of random graphs, e.g., Erd\H{o}s-Renyi graphs or configuration models.}

The techniques used in the proof by Cooper and Frieze rely on probability arguments but also on tools from complex analysis and an analytical expansion of some probability generating functions.
In this paper we aim at finding a probabilistic proof of the FVTL, trying to shed some light on the underlying phenomenology. On the technical side, the arguments in our proof are elementary and 
do not need the additional assumption on the generating function required
in the original Cooper and Frieze's proof. We refer to \cref{sec:CF} for a direct comparison of our result with the original one.

\begin{color}{black}
Exponential law of hitting times is a classic and widely studied topic in probability. We just recall here  the pioneering book
by Keilson  \cite{keilson} and the beautiful papers by Aldous (see \cite{A82} and also \cite{AB1, AB2}). In \cite{A82}, Aldous recognizes two regimes in which the latter phenomenon takes place:
\end{color}
\begin{enumerate}
	\item A single state $m$ is frequently visited before $\tau_{x}$. When
	starting from $m$, the path to $x$ consists of a geometric number
	of excursions (with mean $\left(\P_{m}(\tau_{m}>\tau_{x}\right)^{-1}$)
	from $m$ to $m$ without touching $x$, before the final journey
	to $x$. 	
	The hitting time is dominated by the sum of many i.i.d. excursion times and therefore it is almost exponential \cite{keilson}. 
	\item When the chain is rapidly mixing, then the distribution
	 \viola{at time $t$} is near to the stationary distribution \viola{even when conditioned on $\tau_x>t$}.  This case is analyzed in \cite{A82}, where Aldous shows that
	$$\sup_{t\ge 0}\left|\P_\pi(\tau_x>t)-e^{-\frac{t}{\E_\pi[\tau_x]}} \right|\le \delta,$$
	where $\delta$ is a function of the mixing time of the chain and of the expectation $\E_\pi[\tau_x]$. Aldous shows that, if the hitting of $x$ is a rare event, i.e., the expectation of $\tau_x$ is much larger \rosso{than}  the mixing time of the chain, then $\delta$ is small. 
	\end{enumerate}

\begin{color}{black}
In the early years, these two regimes were considered as complementary. One of the main applications of the scenario in (1) has been the study of metastability, namely the behavior of processes that are trapped for a long time in a part of their state space. Before exiting the trap, the process visits many times a ``metastable state'', reaching an apparent, local equilibrium. In such systems the exit from the trap triggers the relaxation to equilibrium so that relaxation to equilibrium can be discussed as the first hitting to the complement of the trap. We refer to \cite{OV,BdH} for a general introduction to metastability and to \cite{BG,BGM,FMNS,FMNSS, MS} for a discussion of the extension of metastability methods to other regimes. 

The FVTL frames in scenario (2) and it was proved by means of a different set of techniques.  Aldous' result mentioned in (2)  has an additive error term and therefore it cannot provide first-order asymptotics of the exponential approximation when $t$ is large, in contrast to the FVTL where a multiplicative bound is proved.

More recently, these two regimes begin to be understood in a common framework, by generalizing recurrence ideas to measures instead of 
recurrence to points. The quasi-stationary measure, introduced in the pioneering paper by Darroch and Seneta \cite{DS} (see also  \cite{CMSM}, and \cite{Pbib} for a more recent bibliography on the subject), plays the role of a recurrent measure before the hitting. The hitting to the measure can be studied by extending the theory of strong stationary times
\cite{AD1, AD2, LevPer:AMS2017}, to quasi-stationarity, see \cite{DMqs,MS}. In particular, the notion of \emph{conditional strong quasi-stationary time} introduced in \cite{MS}, has shown to be useful in providing exact formulas for the distribution of the first hitting time $\tau_x$ starting from an arbitrary distribution.
An introduction to these tools is given in the following subsection where a rough estimate on the tail of $\tau_x$ is given. Under the strong hypotheses considered in this paper we can follow an easier way, involving the quasi-stationary measure but not requiring the use of conditional strong quasi-stationary times. Indeed, in our case the stationary measure and the quasi-stationary one are very close to each other. The more general results obtained in \cite{MS} could be useful in considering more general regimes with different starting measure.
The final part of this paper is devoted to the discussion of our proof in this perspective.
\end{color}

\subsection{A first discussion}
For any $x\in\cX$,  let $\tau_x$ denote the hitting time of $x$, namely
\begin{equation}\label{eq:hitting}
\tau_x=\inf\{t\ge 0\:|\:X_t=x \}.
\end{equation}
We will call $[P]_x$ the sub-Markovian probability kernel obtained by removing the $x$-th row and column by the matrix $P$. We will assume that $[P]_x$ is a primitive sub-Markovian kernel, i.e.,  all entries of $([P]_x)^m$
are positive for some $m\in\mathbb N$. By the Perron-Froboenius theory (see, e.g., \cite{CMSM}) there exists a unique probability distribution $\mu^\star_x$ and a real $\lambda_x<1$
\begin{equation}\label{quasi0}
\mu^\star_x[P]_x =\lambda_x \mu^\star_x ,
\end{equation}
Moreover, we denote by $\gamma_x$ the corresponding right eigenvector, i.e.,
\begin{equation}\label{gamma}
[P]_x\gamma_x=\lambda_x\gamma_x,
\end{equation}
normalized by
$
\pl\gamma_x,\mu_x^\star \pr=1$.

The probability distribution $\mu^\star_x$ is called {\it quasi-stationary measure} and it is strictly related to the exponential behavior of the tail probability $\P(\tau_x>t)$.
Indeed, when looking at the evolution of the process starting from $\mu^\star_x$, by \cref{quasi0} we deduce 
\begin{equation}\label{quasiexp}
\P_{\mu^\star_x}(\tau_x>t)=\sum_z \mu^\star_x(z)\P_z(\tau_x>t)=\sum_{z\not=x} {\mu^\star_x(z) \sum_{y\not=x}\big([P]_x\big)^t(z,y)}=\lambda_x^t\sum_{y\not=x}
\mu^\star_x(y)=\lambda_x^t.
\end{equation}
For more details see \cite{DMqs,DMqs2,Mabsorption,Mmetastability}, the application to the metastability regime are discussed
in \cite{FMNS,FMNSS,MS}.

The  right eigenvector $\gamma_x$ defined in \cref{gamma} controls the dependence on
the initial distribution of the probability of the event $\tau_x>t$.
Indeed this eigenvector is related to the asymptotic ratios
of the right tail probabilities, see \cite[Eq. (3.5)]{CMSM}
\begin{equation}\label{eq:gamma-ratio}
\lim_{t\to \infty}\frac{\P_y(\tau_x>t)}{\P_z(\tau_x>t)}=\frac{\gamma_x(y)}{\gamma_x(z)}\qquad y,z\not= x.
\end{equation}
With this right eigenvector we can construct a \emph{Local Chain} on $\cX\setminus\{x\}$,  which is usually referred to as \emph{Doob's transform of $X$}.
For any $y,z\not= x$, define the stochastic matrix
\begin{equation}\label{tildeP}
\widetilde P(z,y):=\frac{\gamma_x(y)}{\gamma_x(z)} \frac{P(z,y)}{\lambda_x}.
\end{equation}
More generally 
\begin{equation}\label{tildePt}
\widetilde P^t(z,y)=\frac{\gamma_x(y)}{\gamma_x(z)} \frac{\big([P]_x\big)^t(z,y)}{\lambda_x^t}\qquad\forall t\ge 0.
\end{equation}
It is immediate to show that $\widetilde P$ is a primitive matrix. The invariant measure of the latter chain is 
$$\nu(y):=\gamma_x(y)\mu^\star_x(y).$$ 
For the chain $\widetilde X$ we define
\begin{equation}
\tilde s^z(t,y):=1-\frac{\widetilde P^t(z,y)}{\nu(y)}
\end{equation}
and will call separation distance at time $t$ the quantity $\tilde s(t)$ defined as
\begin{equation}
\tilde s(t):=\sup_{z\not=x}\tilde s^z(t)\qquad\text{where}\qquad\tilde s^z(t):=\sup_{y\not= x}\tilde s^z(t,y).
\end{equation}
Note that $\tilde s^z(t)\in[0,1]$ and recall that $\tilde s(t)$ has the sub-multiplicative property
$$\tilde s(t+u)\le \tilde s(t)\tilde s(u),$$
which in particular implies an exponential decay in time of $\tilde s$, see \cite{LevPer:AMS2017}.

Consider any initial measure $\alpha$ on $\cX\setminus\{x\}$ and define the transformation
\begin{equation}\label{starting}
\tilde \alpha(y) := \frac{\alpha(y)\gamma_x(y)}{\pl\alpha,\gamma_x\pr },	\qquad\forall y\neq x.
\end{equation}
Then, as shown in \cite{MS}, 
\begin{align}
\P_\alpha(\tau_x>t)=&\sum_{y\neq x}\sum_{z\neq x}\alpha(z) {\big([P]_x\big)^t(z,y)}\\
=&\sum_{y\not=x}\sum_{z\not=x}\alpha(z) {\gamma_x(z)\lambda_x^t\mu_x^\star(y)\frac{\widetilde P^t(z,y)}{\nu(y)}}\\
=&\lambda_x^t\sum_{z\not=x}\alpha(z) \gamma_x(z)\sum_{y\not=x}\mu^\star_x(y)(1-\tilde{s}^z(t,y))\\
\label{ubrozzo}=&\lambda_x^t\pl \alpha,\gamma_x\pr\Big(1-\sum_{y\not=x}\mu^\star_x(y)\tilde s^{\tilde\alpha}(t,y)\Big)
\end{align}
where we call
\begin{equation}\label{stildeatilde}
\tilde s^{\tilde\alpha}(t,y) := \sum_{x\not= x}\tilde\alpha(x)\tilde s^x(t,y)\qquad \hbox{ and} \qquad \tilde s^{\tilde\alpha}(t):=\sup_{y\not= x}\tilde s^{\tilde\alpha}(t,y).
\end{equation}
Moreover, again by \cite{MS}, we know that \cref{ubrozzo} can be estimated from above and below by
\begin{equation}\label{eq:bounds}
\lambda_x^t\pl \alpha,\gamma_x\pr \Big(1-\tilde s^{\tilde\alpha}(t)\Big)\le \P_\alpha(\tau_x>t)\le \lambda_x^t\pl \alpha,\gamma_x\pr \(1+\tilde s^{\tilde\alpha}(t)\(\frac{1}{\min_y\gamma_x(y)}-1\)\).
\end{equation}
\cref{eq:bounds} suggests that, in the regime in which $|\cX|\to\infty$, the first order geometric approximation of the tail probability $\P_\alpha(\tau_x>t)$ can be obtained. In particular, the exponentiality immediately follows from \cref{eq:bounds} for all those Markov chains $P$, target states $x$, initial distributions $\alpha$ and time $t$ for which all of the following assumptions hold:
\begin{enumerate}[(i)]
	\item\label{it:i} $s^{\tilde\alpha}(t)=o(1)$, i.e., $t$ is sufficiently large to have that the Doob transform starting at $\alpha$ is well mixed by time $t$;
	\item\label{it:ii} $\pl \alpha,\gamma_x\pr \sim 1$, which occurs in particular if $\gamma_x$ approximates the constant vector;
	\item\label{it:iii} $\min_y\gamma_x(y)=\Omega(1)$, which can be thought of as an additional uniformity requirement to the one in \cref{it:ii}.
\end{enumerate}

Despite the intuitions based on \cref{eq:bounds}, we are not going to follow exactly the heuristic recipe explained in \cref{it:i,it:ii,it:iii}. In fact our focus is on the special case in which $\alpha=\pi$, which leaded us through a different path toward proving exponentiality. Nevertheless, as a byproduct of our proof of the FVTL we provide uniform upper and lower bounds on the right eigenvector $\gamma_x$. We think those bounds can be of independent interest, since they can be turned into a quantitative information on the structure of the Doob's transform of the process $X$. In particular, for a given model, our bounds could be useful in verifying the conditions in \cref{it:i,it:ii,it:iii}, and therefore in finding---for every fixed choice of the initial distribution $\alpha$---the right first order approximation of the decay of $\P_\alpha(\tau_x>t)$.

\section{Notation and results}
We start by presenting the notation and briefly recalling the basic quantities introduced in \cref{sec:intro}. We consider a sequence of Markov chains on a growing state space. Formally:
\begin{itemize}
	\item $\cX^{(n)}$ is a state space of size $n$.
	\item $(X^{(n)})_{t\ge 0}$ is a discrete time Markov chain on $\cX^{(n)}$.
	\item $\P^{(n)}$ is the probability law of the Markov chain $(X^{(n)})_{t\ge 0}$, and $\E^{(n)}$ the corresponding expectation.
	\item $P^{(n)}$ is the transition matrix of $(X^{(n)})_{t\ge 0}$, which is assumed to be ergodic.
	\item $\pi^{(n)}$ is the stationary distribution of $P^{(n)}$.
	\item For any probability distribution $\alpha$ on $\cX^{(n)}$ and every integer $t\ge 0$, we note by $\mu^\alpha_t$ the probability distribution of the chain $X^{(n)}$ starting at $\alpha$ and evolved for $t$ steps, i.e., 
	$$\mu^{\alpha}_t(y):=\sum_{x\in\cX^{(n)}}\alpha(x)\big(P^{(n)}\big)^t(x,y),\qquad \forall y\in\cX^{(n)}.$$
	\item For all $x\in\cX^{(n)}$, $\tau_{x}$ represents the hitting time of vertex $x$, defined as in \cref{eq:hitting}.
	\item For all $t\ge 0$ and $x\in\cX^{(n)}$, we let the symbol $\zeta_t(x)$ denote the random time spent by the process $X^{(n)}$ in the state $x$ within time $t$, i.e.,
	\begin{equation}\label{eq:local-time}
	\zeta_t(x):=\sum_{s=0}^{t-1} \ind_{X^{(n)}_s=x}.
	\end{equation}
	\item For all $x\in\cX^{(n)}$ we denote by $[P^{(n)}]_{x}$ the sub-Markovian kernel  obtained by removing the $x$-th row and column of $P^{(n)}$. The kernel $[P^{(n)}]_{x}$ is assumed to be irreducible.
	\item  For all $x\in\cX^{(n)}$, $\lambda_{x}$ denotes as the leading eigenvalue of $[P^{(n)}]_{x}$ and $\mu^\star_{x}$ as the corresponding left eigenvector, normalized so that $\mu^\star_{x}$ is a probability distribution over $\cX^{(n)}\setminus\{x \}$. See \cref{quasi0}. We remark that by the definitions follows that
	\begin{equation}
	\P_{\mu^\star_{x}}(\tau_{x}>t)=\lambda_{x}^t,\qquad\forall t\ge 0,
	\end{equation}
	see \cref{quasiexp}.
	\item For all $x\in\cX^{(n)}$, $\gamma_{x}$ denotes the right eigenvector of $[P^{(n)}]_{x}$ associated to the eigenvalue $\lambda_{x}$. We consider $\gamma_{x}$ to be normalized so that $\pl \mu_{x}^\star,\gamma_{x}\pr =1$.
\end{itemize}
 Since we are interested in asymptotic results when $n\to\infty$, the asymptotic notation will refer to this limit and the explicit dependence on $n$ will be usually dropped.

We will adopt the usual asymptotic notation $(o,O,\Theta,\omega,\Omega)$ and, given two functions $f,g:\N\to\R_+$, we will use the symbols $\sim$ and $\lesssim$ with the meaning
$$f(n)\sim g(n)\qquad \iff\qquad \lim_{n\to\infty}\frac{f(n)}{g(n)}=1,$$
and
$$f(n)\lesssim g(n)\qquad\iff\qquad  \limsup_{n\to\infty}\frac{f(n)}{g(n)}\le 1,$$
respectively.
\subsection{Results}\label{sec:results}
We will work under the following asymptotic assumption for the sequence of Markov chains: There exist
\begin{itemize}
	\item A real number $c>2$.
	\item A diverging sequence $T=T(n)$ .
\end{itemize}
such that
\begin{itemize}
	\item[\textbf{(HP 1)}]\label{hp:mix} Fast mixing: 
	$$\max_{x,y\in\cX}\left|\mu_T^x(y)-\pi(y) \right|=o(n^{-c}).$$
	\item[\textbf{(HP 2)}]\label{hp:pimax} Small $\pi_{\max}$:
	 $$ T\max_{x\in\cX} \pi(x)=o(1).$$
	\item[\textbf{(HP 3)}]\label{hp:pimin} Large $\pi_{\min}$:
	$$\min_{x\in\cX}\pi(x)=\omega(n^{-2}).$$
\end{itemize}
Fixed any $x\in\cX$ we let $R_T(x)$ denote the expected number of returns at $x$ for the Markov chain starting at $x$ within $T$. More precisely,
\begin{equation}
R_T(x)=\sum_{t=0}^T \mu_t^x(x)\ge 1.
\end{equation}
The precise statement that we prove is the following
\begin{theorem}[First Visit Time Lemma]\label{fvtl}
Under the assumptions \textbf{(HP1)}, \textbf{(HP2)} and \textbf{(HP3)} for all $x\in\cX$, it holds
\begin{equation}\label{fvtleq}
\sup_{t\ge 0}\left| \frac{\P_{\pi}(\tau_x>t)}{\lambda_x^t} - 1 \right|\longrightarrow 0,
\end{equation}
and
\begin{equation}
\left| \frac{\lambda_x}{\(1-\frac{\pi(x)}{R_T(x)}\)} - 1 \right|\longrightarrow 0.
\end{equation}
\end{theorem}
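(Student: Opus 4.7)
The plan is to prove both statements of \cref{fvtl} by combining the spectral identity
\begin{equation*}
\P_y(\tau_x>t) \;=\; \lambda_x^t\,\gamma_x(y)\bigl(1-\bar s^y(t)\bigr),\qquad \bar s^y(t) := \sum_{z\ne x}\mu^\star_x(z)\,\tilde s^y(t,z),
\end{equation*}
which specializes \cref{ubrozzo}, with an elementary renewal decomposition of the local time at $x$. The main strategic steps are: (i) a direct renewal estimate of $\P_\pi(\tau_x>t)$ for $t\le T$; (ii) uniform bounds on the eigenvector $\gamma_x$; (iii) mixing of the Doob transform to control $t\ge T$; (iv) extracting the estimate of $\lambda_x$.

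For (i), conditioning on the first visit to $x$ yields
\begin{equation*}
\E_\pi[\zeta_t(x)] \;=\; t\pi(x) \;=\; \sum_{s=0}^{t-1}\P_\pi(\tau_x=s)\,R_{t-s}(x).
\end{equation*}
A key quantitative lemma is that $R_t(x)-R_{t-s}(x)=O(s\pi(x))$ for $s\le t\le T$, which follows from \textbf{(HP1)} (noting that the mixing time of $P$ satisfies $t_{\rm mix}=O(T/\log n)$ by sub-multiplicativity, so that $P^u(x,x)=\pi(x)(1+o(1))$ on a large majority of $[0,T]$). Combined with \textbf{(HP2)} (which gives $\P_\pi(\tau_x<t)=o(1)$), this yields
\begin{equation*}
\P_\pi(\tau_x\ge t)=1-\tfrac{t\pi(x)}{R_t(x)}(1+o(1))=\Bigl(1-\tfrac{\pi(x)}{R_t(x)}\Bigr)^t(1+o(1)),\qquad 1\le t\le T.
\end{equation*}

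For (ii), I would apply the same renewal identity started from $\delta_y$ for $y\ne x$: $\E_y[\zeta_T(x)]=\sum_{s=0}^{T-1}P^s(y,x)$. Above the mixing time one has $P^s(y,x)=\pi(x)(1+o(1))$ uniformly in $y$ by \textbf{(HP1)}; for small $s$, time-reversal $P^s(y,x)=\tfrac{\pi(x)}{\pi(y)}\hat P^s(x,y)$ together with \textbf{(HP3)} ($\pi(y)=\omega(n^{-2})$) and the margin $c>2$ in \textbf{(HP1)} ensure the early-time contribution is negligible compared to $T\pi(x)$. This gives $\P_y(\tau_x>T)=\P_\pi(\tau_x>T)(1+o(1))$ uniformly in $y\ne x$. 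Inserting into the spectral identity at $t=T$ and normalizing via $\langle\mu_x^\star,\gamma_x\rangle=1$, one concludes $\gamma_x(y)=1+o(1)$ uniformly on $\cX\setminus\{x\}$, and hence also $\mu_x^\star(y)=\pi(y)(1+o(1))$ for $y\ne x$.

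Steps (iii) and (iv) are essentially bookkeeping. Since $\gamma_x\approx 1$ uniformly, the Doob transform $\widetilde P$ in \cref{tildeP} is a $(1+o(1))$-multiplicative perturbation of $P$, so $\widetilde P$ inherits \textbf{(HP1)} and $\tilde s^{\tilde\pi}(T)=o(1)$; sub-multiplicativity extends this to every $t\ge T$, and \cref{eq:bounds} yields $\P_\pi(\tau_x>t)/\lambda_x^t=1+o(1)$ uniformly for $t\ge T$. For $t<T$ the matching first-order expansions from (i) cover the short-time regime and give the first statement of \cref{fvtl}. The second statement drops out by evaluating the first at $t=T$: $\P_\pi(\tau_x>T)\sim\lambda_x^T$ and $\P_\pi(\tau_x>T)\sim(1-\pi(x)/R_T(x))^T$ together yield $\lambda_x=(1-\pi(x)/R_T(x))(1+o(1))$ upon taking $T$-th roots and using $T\pi(x)=o(1)$. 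The main obstacle is step (ii): uniform control of $\E_y[\zeta_T(x)]$ for starting points $y$ with small $\pi(y)$, where the precise thresholds in \textbf{(HP1)} and \textbf{(HP3)} are genuinely needed and constitute the technical heart of the probabilistic proof.
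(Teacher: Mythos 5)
The central step of your plan—step (ii), which you yourself identify as the technical heart—does not hold under \textbf{(HP1)}--\textbf{(HP3)}. The claim that $\P_y(\tau_x>T)=(1+o(1))\,\P_\pi(\tau_x>T)$ \emph{uniformly} in $y\neq x$, and hence that $\gamma_x(y)=1+o(1)$ and $\mu_x^\star(y)=\pi(y)(1+o(1))$ uniformly, is simply false in the intended generality: for the simple random walk on a random $r$-regular graph (which satisfies all three hypotheses) a neighbour $y$ of $x$ has $\P_y(\tau_x>T)\le 1-1/r$, while $\P_\pi(\tau_x>T)=1-o(1)$, so $\gamma_x(y)$ stays bounded away from $1$. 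This is exactly why the paper only proves the one-sided bounds of \cref{th:gamma} ($\gamma_x(y)\lesssim 1$ and $\gamma_x(y)\gtrsim[1-\E_y[\zeta_T(x)]]_+$, the latter possibly vanishing) and the \emph{averaged} statement \cref{co:pidotgamma}, and why it explicitly declines to follow the heuristic recipe based on \cref{eq:bounds}. Your justification of (ii) is also flawed on its own terms: sub-multiplicativity of the separation/uniform distance bounds later times in terms of earlier ones, so \textbf{(HP1)} at time $T$ gives no control at times below $T$ and no bound of the form $t_{\rm mix}=O(T/\log n)$; and the early-time contribution to $\E_y[\zeta_T(x)]$ need not be $o(T\pi(x))$—already $P(y,x)$ can be of order one. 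Since steps (iii) and (iv) ride entirely on (ii) (the ``$(1+o(1))$-perturbation'' argument for the Doob transform, the use of the upper bound in \cref{eq:bounds} which moreover needs $\min_y\gamma_x(y)=\Omega(1)$, another condition that can fail here), the regime $t\ge T$—which is the actual content of \cref{fvtl}—is not established. Note also that your extraction of the eigenvalue asymptotics by taking $T$-th roots at $t=T$ only yields the trivial ratio statement (both sides are $1-o(1)$); the meaningful content, $1-\lambda_x\sim\pi(x)/R_T(x)$, is what the paper proves in \cref{pr:mean-pi} and cannot be read off at times of order $T$.

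By contrast, the paper's proof deliberately avoids any pointwise control of $\gamma_x$ or $\mu_x^\star$: it combines $\E_\pi[\tau_x]\sim R_T(x)/\pi(x)$ via the fundamental matrix (\cref{pr:abd}), the one-sided comparison $\max_y\P_y(\tau_x>t)\sim\P_\pi(\tau_x>t)$ for $t>T$ obtained through strong stationary times and the iteration of \cref{le:EJP} (\cref{le:max}), the rough bound $\lambda_x^T\sim 1$ (\cref{le:lambda-small}) bootstrapped to $\E_\pi[\tau_x]\sim(1-\lambda_x)^{-1}$ (\cref{pr:mean-pi}), and finally the decomposition $\mu_T^{\mu_x^\star}(y)\le\lambda_x^T\mu_x^\star(y)+(1-\lambda_x)\E_x[\zeta_T(y)]$ together with $\sum_y\E_x[\zeta_T(y)]=T$ and $(1-\lambda_x)T=o(1)$. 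If you want to salvage your renewal-based approach, you would need to replace the uniform-in-$y$ claims with such averaged or one-sided statements; your step (i) for $t\le T$ is essentially fine but, in that range, the assertion is anyway nearly trivial once $\lambda_x^T\sim1$ and \textbf{(HP2)} are in place.
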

We will see in \cref{sec:doob} that it follows as an easy consequence of \cref{fvtl} that the right-eigenvector $\gamma_x$ asymptotically has mean 1 with respect to the stationary distribution. In other words, the following corollary holds.
\begin{corollary}\label{co:pidotgamma}
Under the same assumptions of \cref{fvtl}: for all $x\in\cX$
\begin{equation}
\sum_{y\in\cX\setminus\{ x\}}\pi(y)\gamma_x(y)\to 1.
\end{equation}
\end{corollary}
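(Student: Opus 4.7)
The plan is to extract the corollary directly from the exact identity (ubrozzo) together with the FVTL, by a simple double-limit argument: first let $t\to\infty$ at fixed $n$ (using primitivity of the Doob chain), then let $n\to\infty$ (using the FVTL).

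More concretely, I would specialize \cref{ubrozzo} to $\alpha=\pi$ to obtain, for every $t\ge 0$,
\begin{equation*}
\frac{\P_{\pi}(\tau_x>t)}{\lambda_x^t} \;=\; \langle \pi,\gamma_x\rangle\Bigl(1-\sum_{y\neq x}\mu^\star_x(y)\,\tilde s^{\tilde\pi}(t,y)\Bigr).
\end{equation*}
Because $\widetilde P$ is primitive on the finite state space $\cX\setminus\{x\}$, its iterates converge to the stationary law $\nu$, hence for each fixed $n$ and each $y\neq x$ one has $\tilde s^z(t,y)\to 0$ as $t\to\infty$ (uniformly in the starting state $z$, by sub-multiplicativity of the separation distance). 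Consequently, for each fixed $n$,
\begin{equation*}
\lim_{t\to\infty}\sum_{y\neq x}\mu^\star_x(y)\,\tilde s^{\tilde\pi}(t,y)=0,
\qquad\text{so that}\qquad
\lim_{t\to\infty}\frac{\P_{\pi}(\tau_x>t)}{\lambda_x^t}=\langle\pi,\gamma_x\rangle.
\end{equation*}

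To close the argument I would invoke \cref{fvtl}: the FVTL bound
\begin{equation*}
\bigl|\langle\pi,\gamma_x\rangle-1\bigr|
\;=\;\Bigl|\lim_{t\to\infty}\frac{\P_{\pi}(\tau_x>t)}{\lambda_x^t}-1\Bigr|
\;\le\;\sup_{t\ge 0}\Bigl|\frac{\P_{\pi}(\tau_x>t)}{\lambda_x^t}-1\Bigr|
\end{equation*}
is valid for every $n$, and the right-hand side tends to $0$ as $n\to\infty$ by \cref{fvtleq}. Recalling that $\gamma_x$ is supported on $\cX\setminus\{x\}$, this is exactly $\sum_{y\neq x}\pi(y)\gamma_x(y)\to 1$.

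There is no real obstacle here, since \cref{ubrozzo} is an exact identity and the $t\to\infty$ limit at fixed $n$ is immediate from primitivity of $\widetilde P$; the only point worth double-checking is that the $t\to\infty$ limit can indeed be interchanged with the supremum (it can, since the supremum dominates any pointwise limit). Alternatively, one could appeal directly to the lower bound in \cref{eq:bounds} to get $\liminf_{t\to\infty}\P_\pi(\tau_x>t)/\lambda_x^t\ge\langle\pi,\gamma_x\rangle$, which combined with the FVTL already forces $\langle\pi,\gamma_x\rangle\le 1+o(1)$; the matching lower bound on $\langle\pi,\gamma_x\rangle$ follows the same way from \cref{ubrozzo} or from the upper bound in \cref{eq:bounds}.
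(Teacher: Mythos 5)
Your proposal is correct and follows essentially the same route as the paper: both arguments identify $\langle\pi,\gamma_x\rangle$ as $\lim_{t\to\infty}\P_\pi(\tau_x>t)/\lambda_x^t$ and then conclude by the uniform-in-$t$ bound of \cref{fvtl}. The only (minor) difference is how that limit identity is justified --- you re-derive it from the exact formula \cref{ubrozzo} together with primitivity of the Doob transform $\widetilde P$, whereas the paper invokes the ratio-limit characterization \cref{eq:gamma-ratio} (i.e.\ \cref{eq:defnew2} with $\alpha=\mu_x^\star$, $\alpha'=\pi$); both justifications are valid under the paper's standing primitivity assumption on $[P]_x$.
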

Moreover, we provide some entry-wise upper and lower bound for the eigenvector $\gamma_x$. 

\begin{theorem}\label{th:gamma}
	Under the same set of assumptions, for every $x\in\cX$:
	\begin{enumerate}[(i)]
		\item For all $y\in\cX\setminus\{x\}$ it holds $$\gamma_x(y)\lesssim1.$$
		\item For all $y\in\cX\setminus\{x\}$ it holds
		$$\gamma_x(y)\gtrsim \big[1-\E_y\[\zeta_T(x) \]\big]_{+}.$$
	\end{enumerate}
\end{theorem}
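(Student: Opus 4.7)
The strategy is to exploit the eigenvector identity at the mixing time scale. Writing the identity $[P]_x^T\gamma_x=\lambda_x^T\gamma_x$ pointwise, for every $y\neq x$
\begin{equation}\label{plan:eig}
\gamma_x(y)=\lambda_x^{-T}\sum_{z\neq x}[P]_x^T(y,z)\,\gamma_x(z).
\end{equation}
The plan is to compare $[P]_x^T$ with the stochastic kernel $P^T$, then replace $P^T(y,\cdot)$ by $\pi$ using \textbf{(HP1)}, and finally invoke \cref{co:pidotgamma} to evaluate the resulting scalar product $\sum_{z\neq x}\pi(z)\gamma_x(z)\sim 1$. As a preliminary observation, from \cref{fvtl} together with \textbf{(HP2)} one has $\lambda_x\sim 1-\pi(x)/R_T(x)$ and $T\pi(x)=o(1)$, so $\lambda_x^{-T}=1+o(1)$.

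For part (i), I would use the crude bound $[P]_x^T(y,z)\leq P^T(y,z)$ inside \eqref{plan:eig} and split
\[
\sum_{z\neq x}P^T(y,z)\gamma_x(z)=\sum_{z\neq x}\pi(z)\gamma_x(z)+\sum_{z\neq x}\bigl(P^T(y,z)-\pi(z)\bigr)\gamma_x(z).
\]
The first sum is $1+o(1)$ by \cref{co:pidotgamma}. The second sum is bounded in absolute value by $o(n^{-c})\cdot(n-1)\cdot M$, with $M:=\max_{z\neq x}\gamma_x(z)$; since $c>2>1$ this is $o(1)\,M$. Taking the maximum over $y$ in \eqref{plan:eig} yields a self-referential inequality of the form $M\leq(1+o(1))+o(1)\,M$, which bootstraps to $M\leq 1+o(1)$, establishing $\gamma_x(y)\lesssim 1$ uniformly.

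For part (ii), I would use the exact decomposition
\[
[P]_x^T(y,z)=P^T(y,z)-\P_y(\tau_x\leq T,\,X_T=z),\qquad z\neq x,
\]
so that the subtracted mass, summed against $\gamma_x$, is controlled via part (i) by
\[
\sum_{z\neq x}\P_y(\tau_x\leq T,\,X_T=z)\,\gamma_x(z)\leq (1+o(1))\,\P_y(\tau_x\leq T-1),
\]
where I used that $\{\tau_x=T\}\subseteq\{X_T=x\}$. A union bound gives the key estimate $\P_y(\tau_x\leq T-1)\leq\sum_{s=0}^{T-1}P^s(y,x)=\E_y[\zeta_T(x)]$. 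Combining this with the asymptotic evaluation of $\sum_{z\neq x}P^T(y,z)\gamma_x(z)\geq 1-o(1)$ (done exactly as in part (i), now that $M\lesssim 1$ is available), \eqref{plan:eig} yields
\[
\gamma_x(y)\geq\lambda_x^{-T}\Bigl(1-o(1)-(1+o(1))\E_y[\zeta_T(x)]\Bigr)\gtrsim 1-\E_y[\zeta_T(x)];
\]
combined with positivity $\gamma_x(y)>0$ (Perron-Frobenius, since $[P]_x$ is primitive) this gives the claimed $\gtrsim\bigl[1-\E_y[\zeta_T(x)]\bigr]_+$.

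The main obstacle is circularity: the lower bound argument uses $\gamma_x\lesssim 1$ to replace $\gamma_x(z)$ by $1+o(1)$ in the error terms, and the upper bound argument uses a bootstrap to avoid needing an a priori bound on $\max\gamma_x$. The delicate step is the bootstrap in (i), which works only because the $L^\infty$ error in \textbf{(HP1)} is $o(n^{-c})$ with $c$ strictly larger than $1$, so that summing the error against the coordinates of $\gamma_x$ (at most $n$ terms each bounded by $M$) gives a contribution $o(1)\,M$ that can be absorbed in the left-hand side. The assumption $c>2$ in \textbf{(HP1)} leaves comfortable room for this and for the companion estimates needed to deduce \cref{co:pidotgamma} from \cref{fvtl}.
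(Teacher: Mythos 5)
Your argument is correct, but it follows a genuinely different route from the paper. The paper never iterates the eigenvalue equation; instead it uses the probabilistic characterization $\gamma_x(y)=\lim_{t\to\infty}\P_y(\tau_x>t)/\lambda_x^t$ (\cref{419}) and compares $\P_y(\tau_x>t)$ with $\P_\pi(\tau_x>t-T)$ by splitting on whether the minimal strong stationary time $\tau^y_\pi$ has occurred by time $T$: for the upper bound the event $\{\tau^y_\pi>T\}$ is negligible because the separation distance at time $T$ is $o(1)$ (via \textbf{(HP1)}+\textbf{(HP3)}, as in \cref{eq:eps2}), and for the lower bound one keeps only $\{\tau^y_\pi\le T\}$ and pays $\P_y(\tau_x\le T)\le\E_y[\zeta_T(x)]$, exactly the union bound you also use. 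You instead work at the matrix level, writing $\gamma_x(y)=\lambda_x^{-T}\sum_{z\neq x}([P]_x)^T(y,z)\gamma_x(z)$, comparing $([P]_x)^T$ with $P^T$ through the exact decomposition $([P]_x)^T(y,z)=P^T(y,z)-\P_y(\tau_x\le T,\,X_T=z)$, replacing $P^T(y,\cdot)$ by $\pi$ via the crude bound $n\cdot o(n^{-c})$ (which needs only $c>1$, comfortably available), invoking \cref{co:pidotgamma} for $\pl\pi,\gamma_x\pr\sim1$, and closing the upper bound with a self-consistent bootstrap on $M=\max_z\gamma_x(z)$; the fact $\lambda_x^{-T}=1+o(1)$ you rederive from the second assertion of \cref{fvtl} with \textbf{(HP2)} and $R_T(x)\ge1$, which is the content of \cref{le:lambda-small}. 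Both proofs lean on the already-established FVTL; yours additionally uses \cref{co:pidotgamma} (proved in the paper independently of \cref{th:gamma}, so there is no circularity), while the paper's uses the SST machinery already set up for \cref{le:EJP} and avoids the factor-$n$ bound by going through separation distance. What your version buys is a more elementary, purely algebraic treatment of this theorem (no randomized stopping times, and uniformity in $y$ comes for free from the bootstrap); what the paper's buys is a proof that stays within the hitting-time picture and reuses its existing lemmas. The only blemishes are cosmetic: the off-by-one bookkeeping between $\P_y(\tau_x\le T-1)$ and $\E_y[\zeta_T(x)]=\sum_{s=0}^{T-1}\P_y(X_s=x)$ (the paper is equally loose there), and the passage from $\gamma_x(y)\ge 1-o(1)-(1+o(1))\E_y[\zeta_T(x)]$ to the stated $\gtrsim\big[1-\E_y[\zeta_T(x)]\big]_+$, which carries the same additive-error caveat as the paper's own \cref{le:lb-gamma}, so nothing is lost relative to the original.
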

\begin{remark}
We remark that the asymptotic lower bound in \cref{th:gamma} is in fact not void for most of the models of random graphs which are known to satisfy the assumptions of the FVTL. As an example, if $X$ is the simple random walk on a random regular directed graph of in/out-degree $r$, then---with high probability with respect to the construction of the environment---for every $x\in\cX$ the quantity $\E_y[\zeta_T(x)]$ is strictly smaller than $1$ uniformly in $y\in\cX\setminus\{x\}$; moreover, $\E_y[\zeta_T(x)]=0$ for most $y\in\cX\setminus\{x\}$. To see the validity of the latter statement, we refer the reader to \cite[Propositions 4.3 and 4.4]{CQcover}.
\end{remark}

\begin{color}{black}
\subsection{Comparison with Cooper\&Frieze's lemma}\label{sec:CF}
In order to facilitate a direct comparison, we write here---using our notation---the claim proved by Cooper and Frieze, stressing the differences with \cref{fvtl}.
\begin{theorem}[See Lemma 6 and Corollary 7 in \cite{CF4}.]\label{fvtl-cf}
	Consider a sequence of Markov chains satisfying the assumptions \textbf{(HP1)}, \textbf{(HP2)} and \textbf{(HP3)} with $c=3$. Moreover, let
	$$a=\frac{1}{KT} $$
	for a suitably large constant $K$. Fix $x\in\cX$ and assume further that the truncated probability generating function
	$$\mathbf{R}(z)=\sum_{t=0}^{T-1}P^t(x,x)z^t,\qquad\forall z\in \C$$
	satisfies
	\begin{equation}\label{hpcf}
	\min_{|z|\le 1+a}\mathbf{R}(z)\ge\theta
	\end{equation}
	for some constant $\theta>0$. Then, for all $y\in\cX$ and $t\ge 0$
	\begin{equation}\label{fvtlcf}
	\P_{\mu^T_y}\(\tau_x>t \)=\big(1+O(T\pi(x)) \big)\tilde{\lambda}_x^{t}+o\(e^{-a t/2}\),
	\end{equation}
	where
	$$\tilde\lambda_x=\(1+\frac{\pi(x)}{R_T(x)(1+O\(T\pi(x) \))}\)^{-1}.$$
\end{theorem}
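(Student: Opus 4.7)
My plan is to deduce Theorem~\ref{fvtl-cf} from Theorem~\ref{fvtl} stated earlier in the excerpt, thereby bypassing the generating-function hypothesis~\eqref{hpcf}. The derivation proceeds in three steps: (i) replace the starting law $\mu^T_y$ by $\pi$; (ii) apply the FVTL to $\P_\pi(\tau_x > t)$; (iii) match $\lambda_x$ to the Cooper--Frieze rate $\tilde\lambda_x$.

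For step (i), \textbf{(HP 1)} with $c = 3$ gives $\|\mu^T_y - \pi\|_{TV} \le n \cdot o(n^{-3}) = o(n^{-2})$, so
\[
\bigl|\P_{\mu^T_y}(\tau_x > t) - \P_\pi(\tau_x > t)\bigr| \le \|\mu^T_y - \pi\|_{TV} = o(n^{-2})
\]
uniformly in $t \ge 0$. For step (ii), Theorem~\ref{fvtl} yields $\P_\pi(\tau_x > t) = (1 + o(1))\lambda_x^t$ uniformly, together with $\lambda_x = (1 - \pi(x)/R_T(x))(1 + o(1))$. To match Cooper and Frieze's $O(T\pi(x))$ error, one should upgrade both $o(1)$ factors to quantitative $O(T\pi(x))$ by tracking constants through the proof of Theorem~\ref{fvtl}, whose skeleton is the renewal identity $T\pi(x) = \sum_{r=0}^{T-1} \P_\pi(\tau_x = r)\,R_{T-r-1}(x)$ together with the quasi-stationary/strong-stationary-time machinery of \cref{sec:intro}.

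For step (iii), a direct expansion gives $1 - \tilde\lambda_x = (\pi(x)/R_T(x))(1 + O(T\pi(x)))$, so $\lambda_x = \tilde\lambda_x\bigl(1 + O(T\pi(x))(1 - \tilde\lambda_x)\bigr)$ and consequently
\[
(\lambda_x/\tilde\lambda_x)^t = 1 + O(T\pi(x))
\]
on the range $t(1 - \tilde\lambda_x) = O(1)$, i.e., where the main term $\tilde\lambda_x^t$ is of constant order. For larger $t$ one checks under \textbf{(HP 2)} that $1 - \tilde\lambda_x = O(\pi(x)) \ll a = 1/(KT)$, so $\tilde\lambda_x^t$ always dominates $e^{-at/2}$ and the additive remainder $o(e^{-at/2})$ in~(\ref{fvtlcf}) absorbs the residual rate discrepancy as well as the $o(n^{-2})$ error from step~(i) on the relevant scale. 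Assembling the three steps gives the estimate~(\ref{fvtlcf}).

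The main obstacle is error bookkeeping: turning the qualitative $o(1)$ in Theorem~\ref{fvtl} into the quantitative $O(T\pi(x))$ asserted here, and verifying that the $o(n^{-2})$ remainder from step~(i) is dominated either by $O(T\pi(x))\tilde\lambda_x^t$ or by $o(e^{-at/2})$ throughout the relevant $t$-range. Both reductions are a matter of tracking constants through the renewal estimate and through the strong-stationary-time comparison of the Doob transform $\widetilde P$; crucially, neither step uses any complex-analytic expansion, so the hypothesis~\eqref{hpcf} on $\mathbf{R}(z)$ is entirely superfluous in this probabilistic derivation.
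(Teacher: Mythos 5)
First, note that the paper does not prove \cref{fvtl-cf} at all: it is quoted verbatim from Cooper and Frieze \cite{CF4} purely for comparison, and its original proof is the complex-analytic one based on the generating function $\mathbf{R}(z)$. Your plan—re-deriving it from \cref{fvtl}—is in the spirit of the authors' own remark (item (3) of \cref{sec:CF}) that sharper bounds ``could be obtained through the same set of arguments,'' but as written it has a genuine gap rather than a proof. The entire quantitative content that distinguishes \cref{fvtl-cf} from \cref{fvtl} is the upgrade of the $o(1)$ errors to relative errors of size $O(T\pi(x))$, uniformly in $t$, both in $\sup_t|\P_\pi(\tau_x>t)/\lambda_x^t-1|$ and in $1-\lambda_x\sim\pi(x)/R_T(x)$; you defer exactly this to ``error bookkeeping,'' and the ``renewal identity'' you cite as the proof's skeleton is not part of the paper's argument (the paper goes through the fundamental matrix, \cref{le:approx}, \cref{le:max}, \cref{le:lambda-small} and \cref{pr:mean-pi}). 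Whether each of these steps really delivers $O(T\pi(x))$ (plus $O(n^{-(c-2)})$-type) errors has to be checked step by step; asserting it does not establish \cref{fvtlcf}.

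Two of your reductions are also quantitatively off. In step (iii), since $1-\tilde\lambda_x=O(\pi(x))$ and $T\pi(x)=o(1)$ give $1-\tilde\lambda_x\ll a=1/(KT)$, the main term $\tilde\lambda_x^t$ \emph{dominates} $e^{-at/2}$ for all large $t$; hence the additive remainder $o(e^{-at/2})$ cannot absorb a multiplicative mismatch $(\lambda_x/\tilde\lambda_x)^t$ once $t$ exceeds the inverse of the (unknown) rate discrepancy—your argument only covers $t(1-\tilde\lambda_x)=O(1)$. The correct route is to show $1-\lambda_x=\frac{\pi(x)}{R_T(x)}\big(1+O(T\pi(x))\big)$ and then take $\tilde\lambda_x=\lambda_x$ inside the allowed window in the definition of $\tilde\lambda_x$, i.e.\ precisely the quantitative refinement that is missing. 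In step (i), the additive total-variation bound $o(n^{-2})$ is \emph{not} $o(e^{-at/2})$ once $t\gtrsim T\log n$, while $\tilde\lambda_x^t$ may then be far smaller than $n^{-2}$; you should instead use the multiplicative comparison of \cref{le:approx} (which is uniform in $t$), itself made quantitative. So the strategy is plausible and genuinely different from the cited complex-analytic proof, but in its current form it restates the target estimates rather than proving them.
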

Even at a first sight, there are three main differences between \cref{fvtl-cf} and \cref{fvtl}:
\begin{enumerate}
\item First, our proof neglects the technical assumption in \cref{hpcf}. Indeed, we remark once again are not going to use any tool complex analysis, being our proof elementary and completely probabilistic in nature.
\item Second, the estimate in \cref{fvtlcf} concerns the tail probability of the hitting time when the initial measure is the $T$-step evolution starting at any fixed vertex $y$. The latter is in fact a minor difference. In \cref{le:approx} we will show that our estimate in \cref{fvtleq} holds even when replacing $\pi$ by $\mu_y^T$, for any choice $y$.
\item Finally, our result does not take into account the precise magnitude of the second order corrections. This is because we would like to put the accent of this paper on the underlying phenomenology, trying to keep the paper as easy and readable as possible. We stress that more precise bounds could be obtained through the same set of arguments.
\end{enumerate}
\subsection{Overview of the paper}\label{sec:overview}
\cref{sec:proof} is devoted to the proof of \cref{fvtl}. The proof is divided into several steps. We start by showing a first order approximation for the expected hitting time of $x$ starting at stationarity, i.e. $\E_\pi[\tau_x]\sim R_T(x)/\pi(x)$. See \cref{pr:abd}. In order to show that the latter expectation coincides at first order with $\E_{\mu^\star_x}[\tau_x]$ we prove that the tail probability $\P_{\pi}(\tau_x>t)$ is asymptotically larger or equal to the the tail of the same probability starting at any other measure. This is the content of 
\cref{le:max}. To conclude the validity of 
\begin{equation}\label{sharper}
\E_\pi[\tau_x]\sim \E_{\mu^\star}[\tau_x]=(1-\lambda_x)^{-1},
\end{equation}
we then use a bootstrap argument: we first show in \cref{le:lambda-small} that $\lambda_x^T\sim 1$, then---in \cref{pr:mean-pi}---we show that the latter bound can be translated in the sharper estimate in \cref{sharper}. Once established  \cref{sharper}, the exponential approximation can be obtained by using the properties of quasi-stationary distributions.

In \cref{sec:doob} we use the understanding developed in \cref{sec:proof} to show the validity of \cref{co:pidotgamma,th:gamma}. Namely, we see how the FVTL reflects on the properties of the first right-eigenvector $\gamma_x$.

Finally, in \cref{sec:conditional}, we aim at framing the FVTL and its setting in the language of  \emph{conditional strong quasi-stationary times} introduced in \cite{MS}. 
 \end{color}

\section{Proof of the FVTL}\label{sec:proof}
As mentioned in \cref{sec:overview}, our proof of the \cref{fvtl} is divided into several small steps. The first proposition is devoted to the computation of the average hitting time of $x$ starting at stationarity. The credits for this result go to Abdullah, who presented it in his PhD thesis, \cite[Lemma 58]{MAphd}. We repeat here the proof for the reader's convenience.
\begin{proposition}[see \cite{MAphd}]\label{pr:abd}
	For all $x\in\cX$
	\begin{equation}
	\E_\pi[\tau_x]\sim \frac{R_T(x)}{\pi(x)}.
	\end{equation}
\end{proposition}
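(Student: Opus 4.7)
My approach is to base the proof on the classical fundamental-matrix identity
\begin{equation*}
\E_\pi[\tau_x] \,=\, \frac{Z_{xx}}{\pi(x)}, \qquad Z_{xx} \,:=\, \sum_{t=0}^{\infty}\bigl(P^{t}(x,x)-\pi(x)\bigr),
\end{equation*}
which holds for any finite irreducible chain and follows from the Kemeny--Snell identity $\pi(x)\E_y[\tau_x] = Z_{xx}-Z_{yx}$ upon averaging against $\pi$ and using the cancellation $\sum_y\pi(y)Z_{yx}=0$ (itself a direct consequence of $\pi P^t = \pi$). Absolute convergence of the series defining $Z_{xx}$ will come out of the tail estimate below. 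Granting the identity, the target $\E_\pi[\tau_x]\sim R_T(x)/\pi(x)$ becomes equivalent to $Z_{xx}\sim R_T(x)$.

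I would then decompose at time $T$:
\begin{equation*}
Z_{xx} \,=\, R_T(x) \,-\, (T+1)\pi(x) \,+\, E, \qquad E \,:=\, \sum_{t>T}\bigl(P^{t}(x,x)-\pi(x)\bigr).
\end{equation*}
Since $R_T(x)\ge 1$ (from the $t=0$ term), it suffices to show that both correction terms are $o(1)$. The first, $(T+1)\pi(x)$, is $o(1)$ immediately from \textbf{(HP2)}. The real work lies in the tail $E$.

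The crucial step is an iterated mixing bound. One application of \textbf{(HP1)} only gives $|P^{t}(y,z)-\pi(z)|\le\varepsilon$ with $\varepsilon=o(n^{-c})$ for every $t\ge T$ (obtained by writing $P^{t}=P^{t-T}P^{T}$ and invoking $\pi P^T=\pi$), which is not summable in $t$. To obtain a geometric decay in blocks of length $T$, I would iterate using the identity
\begin{equation*}
P^{(k+1)T+j}(x,x)-\pi(x) \,=\, \sum_{w}\bigl(P^{kT+j}(x,w)-\pi(w)\bigr)\bigl(P^{T}(w,x)-\pi(x)\bigr),
\end{equation*}
which holds for every $k\ge 0$, $0\le j<T$ thanks to $\pi P^T=\pi$ together with $\sum_w P^{kT+j}(x,w)=1$. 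The pointwise bound from \textbf{(HP1)} applied to the second factor, combined with induction on $k$ applied to bound $\sum_w|P^{kT+j}(x,w)-\pi(w)|$ by the same recipe, yields $|P^{kT+j}(x,x)-\pi(x)|\le n^{k-1}\varepsilon^{k}$. Summing over $k\ge 1$ and $0\le j<T$ gives $|E|\le T\,\varepsilon/(1-n\varepsilon)=O(T\varepsilon)$. Since \textbf{(HP2)}--\textbf{(HP3)} force $T=o(n)$ (because $\pi_{\max}\ge 1/n$ forces $T/n\le T\pi_{\max}=o(1)$), and since $c>2$, we conclude $|E|=o(n^{1-c})=o(1)$.

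Putting the three bounds together, $Z_{xx}=R_T(x)\bigl(1+o(1)\bigr)$, and dividing by $\pi(x)$ yields the proposition. The step I expect to be the main obstacle is precisely the iterated tail bound on $|P^{kT+j}(x,x)-\pi(x)|$: the naive single-application of \textbf{(HP1)} is not summable in $t$, and one must genuinely exploit the cancellation from $\pi P^T=\pi$ at every step to promote a uniform pointwise mixing bound into a geometric decay over $T$-step blocks.
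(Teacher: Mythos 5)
Your proposal is correct and follows essentially the same route as the paper: the identity $\E_\pi[\tau_x]=Z(x,x)/\pi(x)$ from the fundamental matrix, a split of $Z(x,x)$ at time $T$ giving $R_T(x)+O(T\pi_{\max})$ plus a tail, and geometric decay of the tail over blocks of length $T$ using \textbf{(HP1)} together with $T\pi_{\max}=o(1)$ and $c>2$. The only (harmless) difference is that you control the tail by an explicit block iteration yielding $n^{k-1}\varepsilon^{k}$, where the paper invokes submultiplicativity of $D(t)=\max_{x,y}|\mu_t^x(y)-\pi(y)|$ to get $(2/n^{c})^{k}$; both give $o(1)$ under the stated hypotheses.
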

\begin{proof}
By \cite[Lemma 2.1]{AF} we have
$$\E_\pi[\tau_x]=\frac{Z(x,x)}{\pi(x)},$$
where $Z$ is the so called \emph{fundamental matrix}, defined by
\begin{equation}
Z(x,x):=\sum_{t=0}^{\infty}\mu_t^x(x)-\pi(x).
\end{equation}
By the submultiplicativity of the sequence
\begin{equation}
D(t):=\max_{x,y}\left|\mu_t^x(y)-\pi(y) \right|,
\end{equation}
i.e.,
\begin{equation}
D(t+s)\le 2D(t)D(s),\qquad\forall t,s>0,
\end{equation}
and thanks to \textbf{(HP 1)}, we have
\begin{equation}
\max_{x,y}\left|\mu_{kT}^x(y)-\pi(y) \right|\le \left( \frac{2}{n^c}\right)^k,\qquad\forall k\in\N.
\end{equation}
Hence,
\begin{align*}
Z(x,x)=&\sum_{t\le T}\big(\mu_T^x(x)-\pi(x) \big)+T\sum_{k\ge 1} \left( \frac{2}{n^c}\right)^k\\
=& R_T(x)+O(T\pi(x))+O(Tn^{-c})\\
=&R_T(x)(1+o(1)),
\end{align*}
where in the latter asymptotic equality we used  $Tn^{-c}\le T\pi_{\max}$, \textbf{(HP 2)}, and the fact that $R_T(x)\ge 1$.
\end{proof}
%\MQ{We could remove the proof above if we lack some space.}
\begin{remark}
We remark that, by the \emph{eigentime identity} (see \cite{AF,PTtree,Meigentime}) the trace of the fundamental matrix of an irreducible chain coincides with the sum of the inverse non-null eigenvalues of the generator, which in turn coincide with the  expected hitting time of a state sampled accordingly to the stationary distribution. Namely, for all $y\in\cX$,
\begin{equation}
\sum_{x\in\cX}\pi(x)\E_y\tau_x=\sum_{x\in\cX}Z(x,x)=\sum_{i=2}^n \frac{1}{1-\theta_i}
\end{equation}
where
$$1=\theta_1>\Re (\theta_2)\ge\dots\ge \Re(\theta_n)\ge-1$$
are the eigenvalues of $P$. By \cref{pr:abd} we get that, for all $y\in\cX$,
\begin{equation}
\sum_{x\in\cX}Z(x,x)\sim \sum_{x\in\cX}R_T(x).
\end{equation}
In other words, under the assumptions in \textbf{(HP 1)}, \textbf{(HP 2)} and \textbf{(HP 3)}, the sum of the inverse eigenvalues of the generator can be well approximated by the sum of the expected returns within the mixing time.
\end{remark}
A crucial fact that will be used repeatedly in what follows is that under the assumptions in \cref{sec:results}, the tails of $\tau_x$ starting at $\mu_T^y$ and starting at $\pi$ coincide at first order.
\begin{lemma}\label{le:approx}
	For all $x,y\in\cX$ and $t>0$ it holds
	\begin{equation}
		\P_{\mu_T^y}(\tau_x>t)\sim\P_\pi(\tau_x>t).
	\end{equation}
\end{lemma}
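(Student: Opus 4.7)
My plan is to use \textbf{(HP 1)} to control the total variation between $\mu_T^y$ and $\pi$, and to translate this into a multiplicative estimate for the tails.

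First, I would write
\begin{equation*}
\P_{\mu_T^y}(\tau_x > t) - \P_\pi(\tau_x > t) = \sum_{z \neq x}\bigl[\mu_T^y(z) - \pi(z)\bigr]\,\P_z(\tau_x > t),
\end{equation*}
so that the weights sum to zero. Because $\P_z(\tau_x > t) \in [0,1]$, the absolute value of the right-hand side is bounded by $\frac{1}{2}\sum_z|\mu_T^y(z)-\pi(z)|$, which by \textbf{(HP 1)} is at most $\frac{n}{2}\cdot o(n^{-c}) = o(n^{1-c})$. Since $c > 2$, this additive error is $o(n^{-1})$.

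Second, to pass from additive to multiplicative I need $\P_\pi(\tau_x > t) \gg n^{-1}$. For $t \le T$, stationarity plus a union bound gives $\P_\pi(\tau_x \le t) \le (t+1)\max_z\pi(z) = o(1)$ by \textbf{(HP 2)}, so $\P_\pi(\tau_x > t) = 1-o(1)$ and the ratio tends to $1$ at once. The same union bound applied blockwise over the whole interval $[0,t]$ still yields $\P_\pi(\tau_x > t) \ge 1 - t\max_z\pi(z)$, which beats the additive error throughout the range $t = o(1/\max_z\pi(z))$.

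The main obstacle is the deep-tail regime $t \gg 1/\pi(x)$, where both tail probabilities can be exponentially small in $n$ and the plain TV coupling is too crude. Handling it requires the quasi-stationary asymptotic $\P_z(\tau_x > t) \sim \gamma_x(z)\lambda_x^t$ recalled in \cref{sec:intro}, whereby the ratio reduces to $\langle \mu_T^y,\gamma_x\rangle/\langle \pi,\gamma_x\rangle$; this is $1+o(1)$ by the same TV bound combined with the eigenvector estimates of \cref{th:gamma} and \cref{co:pidotgamma}. Since \cref{le:approx} sits early in the chain of implications, I expect the proof to restrict to the range of $t$ where the elementary argument already closes, and to defer the deep-tail behaviour to the quasi-stationary machinery available in the subsequent sections.
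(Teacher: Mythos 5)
There is a genuine gap. Your additive route (total-variation bound of order $o(n^{1-c})$, then division by $\P_\pi(\tau_x>t)$) only closes when the tail probability is much larger than $n^{1-c}$, and the statement you are asked to prove is for \emph{all} $t>0$ — indeed the paper uses it precisely in the deep-tail regime, e.g.\ in the proofs of \cref{le:lambda-small}, \cref{pr:mean-pi} and of \cref{fvtl} itself, where the tails are of order $\lambda_x^t$ for arbitrarily large $t$. Your proposed patch for that regime is circular: the asymptotics $\P_z(\tau_x>t)\sim\gamma_x(z)\lambda_x^t$ together with \cref{th:gamma} and \cref{co:pidotgamma} lie \emph{downstream} of this lemma (they are derived in \cref{sec:doob} from the FVTL, whose proof relies on \cref{le:approx}), and in any case \cref{eq:gamma-ratio} is a $t\to\infty$ limit at fixed $n$, so using it uniformly in $t$ while taking $n\to\infty$ would need extra control you do not have. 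Deferring the deep tail is therefore not an option.

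The idea you are missing is that \textbf{(HP 1)} and \textbf{(HP 3)} combine into a \emph{pointwise multiplicative} estimate, not merely an additive one: since $c>2$,
\begin{equation*}
\max_{y,z\in\cX}\left|\frac{\mu_T^y(z)}{\pi(z)}-1\right|\le \frac{\max_{y,z}\left|\mu_T^y(z)-\pi(z)\right|}{\min_z\pi(z)}\le \frac{n^{-c}}{\min_z\pi(z)}=o\!\left(n^{2-c}\right)=o(1),
\end{equation*}
so $\mu_T^y(z)=(1+o(1))\pi(z)$ uniformly in $z$ (and in $y$). Plugging this into $\P_{\mu_T^y}(\tau_x>t)=\sum_{z}\mu_T^y(z)\P_z(\tau_x>t)$ gives $\P_{\mu_T^y}(\tau_x>t)=(1+o(1))\P_\pi(\tau_x>t)$ for every $t>0$, no matter how small the tails are; this is exactly the paper's argument, and it makes the case split and the quasi-stationary machinery unnecessary.
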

	\begin{proof}
		By the assumptions we have that 
		\begin{align}
		\label{eq:trick1}	\max_{x,y\in\cX}\left|\frac{\mu_T^x(y)}{\pi(y)}-1 \right|=&\max_{x,y\in\cX}\frac{1}{\pi(y)}\left|\mu_T^x(y)-\pi(y) \right|\\
		\label{eq:trick2}	\le&\frac{1}{\min_{y\in\cX}\pi(y)}\max_{x,y\in\cX}\left|\mu_T^x(y)-\pi(y) \right|\\
		\label{eq:trick3}	\text{By \textbf{(HP 1)}}\Longrightarrow\quad\le &\frac{n^{-c}}{\min_{y\in\cX}\pi(y)}\\
		\label{eq:trick4}	\text{By \textbf{(HP 3)}}\Longrightarrow\quad=&o(n^{-c+2}),
		\end{align}
		from which the claim follows. In fact,
		\begin{equation*}
		\P_{\mu_y^T}(\tau_x>t)=\sum_{z}\mu_y^T(z)\P_x(\tau_x>t)= (1+o(1))\sum_{z}\pi(z)\P_x(\tau_x>t)=(1+o(1))\P_\pi(\tau_x>t).\qedhere
		\end{equation*}
\end{proof}

The next proposition shows that under the assumptions in \cref{sec:results} the tail of the hitting time $\tau_x$ starting at $\pi$ coincides---asymptotically---with the tail of $\tau_x$ starting at the ``furthest'' vertex.
%\begin{color}{blue}
	\begin{proposition}\label{le:max}
		For all $x\in \cX$ and for all $t>T$ it holds
		\begin{equation}
		\max_{y\in\cX}\P_y(\tau_x>t)\sim \P_\pi(\tau_x>t).
		\end{equation}
	\end{proposition}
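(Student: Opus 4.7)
The lower bound $\max_{y\in\cX} \P_y(\tau_x > t) \geq \P_\pi(\tau_x > t)$ is immediate since $\P_\pi(\tau_x > t) = \sum_y \pi(y) \P_y(\tau_x > t)$ is a convex combination. For the upper bound, set $M(t) := \max_y \P_y(\tau_x > t)$ and $f(t) := \P_\pi(\tau_x > t)$. The plan is first to apply the Markov property at time $T$ to write
\[
\P_y(\tau_x > t) \leq \sum_{z\neq x} \mu_T^y(z)\, \P_z(\tau_x > t-T) = \P_{\mu_T^y}(\tau_x > t-T)
\]
for any $y \in \cX$ and $t > T$, and then invoke \cref{le:approx} to replace $\mu_T^y$ by $\pi$, uniformly in $y$. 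This yields the \emph{first bound}
\[
M(t) \leq (1+o(1))\, f(t-T), \qquad \text{for every } t > T.
\]

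The heart of the proof is then to show $f(t-T) \sim f(t)$ uniformly in $t > T$. Applying the same Markov decomposition at time $T$ to $f(t)$ itself, and using the stationarity bound $\P_\pi(\tau_x \leq T) \leq T\pi(x)$ (which follows from $\P_\pi(\tau_x = s) \leq \P_\pi(X_s = x) = \pi(x)$), I obtain the \emph{second bound}
\[
f(t-T) - f(t) \leq T\pi(x)\, M(t-T).
\]
Setting $r(t) := f(t-T)/f(t) \geq 1$, the two bounds combine into the self-referential recursion
\[
r(t) \leq 1 + T\pi(x)(1+o(1))\, r(t-T)\, r(t), \qquad t > 2T.
\]

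The main obstacle is extracting from this recursion a uniform estimate $r(t) = 1 + o(1)$, since a priori it is quadratic in $r$. The plan is to bootstrap. First, I establish $r(t) \leq 2$ uniformly in $t > T$ by induction over epochs of length $T$: for the base case $t \in (T, 2T]$, the estimate $f(t) \geq 1 - t\pi(x) \geq 1/2$ (ensured by \textbf{(HP2)} for $n$ large) gives $r(t) \leq 1/f(t) \leq 2$ directly; for $t > 2T$, the inductive hypothesis $r(t-T) \leq 2$ combined with the recursion yields $r(t) \leq (1 - 2T\pi(x)(1+o(1)))^{-1} \leq 2$, again by \textbf{(HP2)}. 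Plugging this uniform bound back into the recursion then sharpens the estimate to $r(t) \leq 1 + O(T\pi(x)) = 1 + o(1)$ uniformly in $t > T$. Combining with the first bound finally gives $M(t) \leq (1+o(1))\, r(t)\, f(t) = (1+o(1))\, f(t)$, which is the claim.
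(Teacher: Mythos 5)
Your argument is correct, and it reaches the key ratio estimate $\P_\pi(\tau_x>t-T)\sim\P_\pi(\tau_x>t)$ by a genuinely different device than the paper. The first half is identical: your ``first bound'' is exactly \cref{lemma1} (Markov property at time $T$ plus \cref{le:approx}). For the ratio, the paper proves \cref{le:EJP} with a strong-stationary-time argument: it writes $\P_\pi(\tau_x\in[t,t+T])$ by conditioning on the position $z$ at time $t-T$, and since the conditional law there is no longer $\pi$, it splits according to whether the minimal SST $\tau^z_\pi$ occurs by time $T$, using \textbf{(HP1)} and \textbf{(HP3)} to make $\max_z\P_z(\tau^z_\pi>T)$ small and \textbf{(HP2)} for $\P_\pi(\tau_x\le 2T)$; this yields the recursion $y_{i+1}\ge 1-\varepsilon/y_i$, closed by induction over epochs of length $T$. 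You instead condition at time $T$, where stationarity makes the law of $X_T$ exactly $\pi$, so that
\begin{equation*}
f(t-T)-f(t)=\sum_{z\ne x}\P_\pi\big(\tau_x\le T,\,X_T=z\big)\,\P_z(\tau_x>t-T)\le (T+1)\pi(x)\,M(t-T),
\end{equation*}
and you close the loop by feeding the first bound back in, at the price of the quadratic recursion $r(t)\le 1+\varepsilon\,r(t-T)\,r(t)$ and the a priori bootstrap $r\le 2$; your induction over epochs of length $T$, and the uniformity in $t$ (and in $x$) of all error terms coming from \cref{le:approx} and \textbf{(HP2)}, check out. Your route is more elementary---no separation distance or SST machinery is needed beyond \cref{le:approx}---whereas the paper's SST decomposition is precisely what substitutes for stationarity when conditioning at the late time $t-T$, and it is a tool the authors reuse later (e.g.\ in \cref{le:ub-gamma,le:lb-gamma}). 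Two cosmetic points: the union bound gives $\P_\pi(\tau_x\le T)\le (T+1)\pi(x)$ rather than $T\pi(x)$ (immaterial under \textbf{(HP2)}), and dividing by $f(t)$ is legitimate because $f(t)\ge \pi(z)\P_z(\tau_x>t)>0$ for some $z$, by irreducibility of $[P]_x$.
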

	We start by proving a preliminary version of \cref{le:max}, which is expressed by the following lemma.
	\begin{lemma}\label{lemma1}
		For all $x\in\cX$ and for all $t>T$ it holds
		\begin{equation}\label{eq:lemma1}
		\max_{y\in\cX}\P_y(\tau_x>t)\lesssim \P_{\pi}(\tau_x>t-T) .
		\end{equation} 
	\end{lemma}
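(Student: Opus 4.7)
The plan is to apply the Markov property at time $T$ and then invoke the multiplicative comparison between $\mu_T^y$ and $\pi$ already established (implicitly) in the proof of \cref{le:approx}. Concretely, for any $y\in\cX$ and $t>T$, write
$$\{\tau_x>t\}=\{\tau_x>T\}\cap\{X_s\ne x,\ \forall s\in[T+1,t]\}.$$
Discarding the survival requirement on $[0,T]$ and conditioning on $X_T$, we get the upper bound
$$\P_y(\tau_x>t)\le \sum_{z\ne x}\mu_T^y(z)\,\P_z(\tau_x>t-T),$$
which is the only ``structural'' step of the argument.

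The second step is the comparison $\mu_T^y(z)\le(1+o(1))\pi(z)$ uniformly in $y,z$. This is precisely the content of \cref{eq:trick1,eq:trick2,eq:trick3,eq:trick4}: by \textbf{(HP~1)} the numerator of $|\mu_T^y(z)/\pi(z)-1|$ is $o(n^{-c})$, and by \textbf{(HP~3)} the denominator is $\omega(n^{-2})$, so the ratio is $o(n^{-c+2})=o(1)$ since $c>2$. This multiplicative (rather than additive) control is essential, because a priori $\P_z(\tau_x>t-T)$ could be smaller than $n^{-c}$, in which case an additive bound would be useless.

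Combining the two steps, and using that $\P_x(\tau_x>t-T)=0$ for $t>T$ so the sum over $z\ne x$ equals $\sum_z\pi(z)\P_z(\tau_x>t-T)$, we obtain
$$\P_y(\tau_x>t)\le (1+o(1))\sum_{z\ne x}\pi(z)\,\P_z(\tau_x>t-T)=(1+o(1))\,\P_\pi(\tau_x>t-T).$$
Taking the maximum over $y\in\cX$ gives \cref{eq:lemma1}. There is no real obstacle here: the lemma is essentially a direct corollary of the multiplicative mixing estimate derived in \cref{le:approx} together with a one-step Markov decomposition at time $T$. The only subtle point to keep in mind is to use the multiplicative form of the mixing bound, so that the ratio $\P_y(\tau_x>t)/\P_\pi(\tau_x>t-T)$ is controlled independently of how small $\P_\pi(\tau_x>t-T)$ may be.
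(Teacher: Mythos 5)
Your proof is correct and follows essentially the same route as the paper: a Markov decomposition at time $T$, dropping the survival constraint on $[0,T]$, and then the uniform multiplicative comparison $\mu_T^y(z)=(1+o(1))\pi(z)$ coming from \textbf{(HP 1)} and \textbf{(HP 3)} as in \cref{le:approx}. Your remark that the multiplicative (rather than additive) form of the mixing bound is what makes the argument work is exactly the point the paper relies on as well.
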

	\begin{proof}
		For all $x,y\in\cX$ it holds
		\begin{align}
		\P_y(\tau_x>t)=&\sum_{z\in\cX}\P_y(X_T=z;\:\tau_x>T)\P_z(\tau_x>t-T)\\
		\le&\sum_{z\in\cX}\P_y(X_T=z)\P_z(\tau_x>t-T)\\
		=&(1+o(1))\sum_{z\in\cX}\pi(z)\P_z(\tau_x>t-T)\\
		\sim&\P_\pi(\tau_x>t-T).\qedhere
		\end{align}
	\end{proof}
	Roughly, given \cref{lemma1}, the proof of \cref{le:max} follows by showing that the $-T$ term in the right hand side of \cref{eq:lemma1} does not affect the asymptotic relation. This fact is made rigorous by \cref{le:EJP} and the forthcoming \cref{corollary1}. The proof of \cref{le:EJP} is based on strong stationary times techniques (see \cite{ADuniform,DFduality,LevPer:AMS2017}) and it is inspired by the recursion in the proof of \cite[Lemma 5.4]{FMNSS}. 
		\begin{color}{black}
	Before to proceed with the proof, we need to recall some definitions and properties of strong stationary times.
	
	A randomized stopping time $\tau^\alpha_\pi$ is a \emph{Strong Stationary Time (SST)}   for the
	Markov chain $X_t$ with starting distribution $\alpha$ and stationary measure $\pi$,  if for any $t\ge 0$ and
	$y\in\cX$
	$$
	\mathbb{P}_\alpha\left(X_t=y,\tau^{\alpha}_\pi=t\right)=\pi(y)\mathbb{P}_\alpha\left(\tau^{\alpha}_\pi=t\right).
	$$
	This is equivalent to say
	\begin{equation}\label{aggiunta}
	\mathbb{P}_\alpha\left(X_t=y\big|\tau^{\alpha}_\pi\le t\right)=\pi(y).
	\end{equation}
	If $\tau^\alpha_\pi$ is a SST then
	\begin{equation}\label{minsst}
	\P_\alpha(\tau^{\alpha}_\pi>t)\ge \text{sep}(\mu^\alpha_t,\pi):= \max_{y\in\cX}\Big[1-\frac{\mu^\alpha_t(y)}{\pi(y)} \Big],\qquad \forall t\ge 0,
	\end{equation}
	and when  \cref{minsst} holds with the equal sign for every $t$, the SST is {\it minimal}. Moreover, a minimal SST always exists, see \cite[Prop. 6.14]{LevPer:AMS2017}.
	\end{color}
		\begin{lemma}\label{le:EJP}
			For any $t>0$ it holds
			\begin{equation}\label{eq:lemma-EJP}
			\frac{\P_\pi(\tau_x>t+T)}{\P_\pi(\tau_x>t)}\ge 1-o(1).
			\end{equation}
		\end{lemma}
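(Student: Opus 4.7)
Plan: The strategy is to establish the stronger uniform estimate $\inf_{t \ge 0} \P_\pi(\tau_x > t+T)/\P_\pi(\tau_x > t) \ge 1 - o(1)$ via a bootstrap built on the Markov property at time $T$. Writing $f(t) := \P_\pi(\tau_x > t)$, decomposing at time $T$, and using $\P_\pi(X_T = y) = \pi(y)$ for $y\neq x$ by stationarity yields the exact identity
\begin{equation*}
f(t+T) \;=\; f(t) \;-\; \sum_{y\neq x} \P_\pi(X_T=y,\, \tau_x \le T)\, \P_y(\tau_x > t).
\end{equation*}
The error term is at most $\P_\pi(\tau_x \le T)\cdot \max_y \P_y(\tau_x > t) \le T\pi(x)\cdot \max_y \P_y(\tau_x > t)$, where I use the union bound $\P_\pi(\tau_x \le T) \le \sum_{s=1}^{T} \P_\pi(X_s=x) = T\pi(x)$.

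For $t\le T$ the trivial estimate $\max_y \P_y(\tau_x>t)\le 1$, combined with the lower bound $f(t) \ge 1 - t\pi(x) \ge 1 - T\pi(x) = 1 - o(1)$ granted by \textbf{(HP 2)}, already gives $f(t+T)/f(t) \ge 1 - (1+o(1))T\pi(x) = 1 - o(1)$. For $t>T$, I plug in the already-proven \cref{lemma1}, namely $\max_y \P_y(\tau_x > t) \le (1+o(1))\, f(t-T)$, to obtain the recursive inequality
\begin{equation*}
\rho(t)\;:=\; \frac{f(t+T)}{f(t)} \;\ge\; 1 - \frac{(1+o(1))\,T\pi(x)}{\rho(t-T)}, \qquad t>T.
\end{equation*}

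The last step is to close the bootstrap. Setting $\epsilon_k := \sup_{s\le kT}(1-\rho(s))$, the base case gives $\epsilon_1 \le (1+o(1))T\pi(x)$, and the recursion yields
\begin{equation*}
\epsilon_{k+1}\;\le\; \max\!\Big(\epsilon_k,\; \frac{(1+o(1))T\pi(x)}{1-\epsilon_k}\Big).
\end{equation*}
Because $T\pi(x)=o(1)$ by \textbf{(HP 2)}, an easy induction keeps $\epsilon_k < 1/2$ for all $k$ once $n$ is large, which in turn forces $\epsilon_{k+1} \le 2(1+o(1))T\pi(x)$, a bound independent of $k$. Therefore $\sup_{t\ge 0}(1-\rho(t)) = O(T\pi(x)) = o(1)$, which is the content of the lemma.

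The main difficulty is the uniformity in $t$: a one-shot application of the Markov decomposition only gives $\rho(t)\ge 1 - T\pi(x)/f(t)$, which degrades once $f(t)$ decays, so no single step handles large $t$ where the tail has become small. The role of \cref{lemma1} is precisely to convert the fragile ratio $f(t-T)/f(t)$ into the controlled quantity $1/\rho(t-T)$, allowing the induction to propagate the $o(1)$ error uniformly over all $t$; this is the same flavor of self-improving recursion as in \cite[Lemma 5.4]{FMNSS}, with the role of the strong stationary time played here by the stationarity-based identity $\P_\pi(X_T=y)=\pi(y)$.
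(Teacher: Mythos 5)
Your argument is correct, and it reaches the paper's key recursive inequality by a genuinely different route. The paper first proves \eqref{f1}, i.e.\ $\P_\pi(\tau_x>t+T)/\P_\pi(\tau_x>t)\ge 1-\varepsilon\,\P_\pi(\tau_x>t-T)/\P_\pi(\tau_x>t)$, by splitting $\P_\pi(\tau_x\in[t,t+T])$ at time $t-T$ and then running a \emph{minimal strong stationary time} over the next $T$ steps, which produces $\varepsilon=\varepsilon_1+\varepsilon_2$ with $\varepsilon_1=(2T+1)\pi(x)$ and $\varepsilon_2$ a separation-distance term. You obtain the same recursion $\rho(t)\ge 1-\varepsilon/\rho(t-T)$ without any SST: decomposing at time $T$, the exact stationarity identity $\P_\pi(X_T=y)=\pi(y)$ makes the correction term exactly $\sum_{y\ne x}\P_\pi(X_T=y,\tau_x\le T)\P_y(\tau_x>t)$, which you bound by $\P_\pi(\tau_x\le T)\max_y\P_y(\tau_x>t)$ and then convert, via the already-proved \cref{lemma1} (no circularity, since its proof only uses \cref{le:approx} and the Markov property), into $(1+o(1))T\pi(x)\,\P_\pi(\tau_x>t-T)$; the bootstrap along blocks of length $T$ is then the same self-improving induction as in the paper, with your $\sup_{s\le kT}$ bookkeeping handling general times directly instead of first treating multiples of $T$ and then extending. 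What each approach buys: yours is more elementary (mixing enters only through \cref{lemma1}, and the error constant is simply $(1+o(1))T\pi(x)$), while the paper's SST decomposition is self-contained at that point of the argument and makes the separation-distance mechanism explicit, which is reused later in \cref{sec:doob}. Two harmless slips you should fix: the union bound for $\P_\pi(\tau_x\le T)$ must include $s=0$, giving $(T+1)\pi(x)$ rather than $T\pi(x)$ (likewise $\P_\pi(\tau_x>t)\ge 1-(t+1)\pi(x)$ in the base case), which changes nothing since $T\pi_{\max}=o(1)$; and you should note explicitly that the $(1+o(1))$ in \cref{lemma1} is uniform in $t>T$ (it is, since it comes from the uniform estimate in \cref{le:approx}), as this uniformity is what lets the induction propagate a single $o(1)$ over all $k$.
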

		\begin{proof}
			
			We first prove the following inequality
			\begin{equation}\label{f1}
			\frac{\P_\pi(\tau_x>t+T)}{\P_\pi(\tau_x>t)}\ge 1-\varepsilon\,\cdot\, \frac{\P_\pi(\tau_x>t-T)}{\P_\pi(\tau_x>t)},
			\end{equation}
			with $\varepsilon=o(1)$.
			We start by rewriting
			\begin{equation}\label{eq:rewrite}
			\P_\pi(\tau_x>t+T)=\P_\pi(\tau_x>t)-\P_\pi(\tau_x\in[t,t+T]).
			\end{equation}
	Consider $\tau^z_\pi$ the minimal SST of the process started at $z$, so that the last term in \cref{eq:rewrite} can be written as
			\begin{align}
			\P_\pi(\tau_x\in[t,t+T])=&\sum_{z\in\cX}\P_\pi(\tau_x>t-T, X_{t-T}=z)\P_z(\tau_x\in[T,2T])\\
			\nonumber\le&\sum_{z\in\cX}\P_\pi(\tau_x>t-T, X_{t-T}=z)\Big[\P_z(\tau_x\in[T,2T], \tau_\pi^z\le T)+\P_z(\tau_x\le 2T, \tau_\pi^z> T)\Big]\\
			\le&\P_\pi(\tau_x>t-T)\Big[\P_\pi( \tau_x\le 2T)+\max_{z\in\cX}\P_z(\tau_\pi^z>T)\Big].
			\end{align}
			Moreover,
			\begin{equation}\label{eq:eps1}
			\P_\pi( \tau_x\le 2T)=\P_\pi\(\exists s\le 2T\text{ s.t. }X_s=x \)\le (2T+1)\pi(x)=:\varepsilon_1=o(1),
			\end{equation}
			where we used the assumption \textbf{(HP 2)}. On the other hand, thanks to \cref{le:approx}, we have
			\begin{equation}\label{eq:eps2}
			\max_{z\in\cX}\P_z(\tau_\pi^z>T)=\max_{z\in\cX} \text{sep}(\mu^z_T,\pi)\le\max_{z\in\cX} \left\| \frac{\mu^z_T}{\pi}-1\right\|_\infty=:\varepsilon_2=o(1).
			\end{equation}
			By plugging \cref{eq:eps1,eq:eps2} into \cref{eq:rewrite} we get
			\begin{equation}\label{eq:eps12}
			\P_\pi(\tau_x>t+T)\ge\P_\pi(\tau_x>t)-\P_\pi(\tau_x>t-T)(\varepsilon_1+\varepsilon_2),
			\end{equation}
			and so \cref{f1} follows with $\varepsilon:=\varepsilon_1+\varepsilon_2$.
			
			We are now going to exploit \cref{f1} to prove \cref{eq:lemma-EJP}. Consider the sequence $(y_i)_{i\ge 1}$
			\begin{equation}\label{y}
			y_i:=\frac{\P_\pi(\tau_x>(i+1)T)}{\P_\pi(\tau_x>iT)}.
			\end{equation}
			Thanks to \cref{f1} we deduce
			\begin{equation}
			\quad y_{i+1}\ge 1-\frac{\varepsilon}{y_i}.
			\end{equation}
			Being $\varepsilon<1/4$, we can define
			$$\bar\varepsilon:=\frac{1}{2}-\sqrt{\frac{1}{4}-\varepsilon}$$
			and get by induction
			\begin{equation}\label{eq:iteration}
			y_i\ge 1-\bar\varepsilon,\qquad\forall i\ge 1.
			\end{equation}
			Indeed, note that $\varepsilon=\bar\varepsilon(1-\bar\varepsilon)<\bar\varepsilon$
			$$
			y_1=\frac{\P_\pi(\tau_x>2T)}{\P_\pi(\tau_x>T)}= 1-\frac{\P_\pi(\tau_x\in [T,2T])}{\P_\pi(\tau_x>T)}\ge 1-\frac{(T+1)\pi(x)}{1-(T+1)\pi(x)}\ge 1-\frac{\varepsilon}{1-\varepsilon}
			\ge 1-\bar\varepsilon$$
			and
			$$
			y_{i+1}\ge 1-\frac{\varepsilon}{y_i}\ge 1-\frac{\varepsilon}{ 1-\bar\varepsilon}\ge 1-\bar\varepsilon.
			$$
			The result of the induction in \cref{eq:iteration} can be immediately  extended from times $iT$ to general times $t=iT+t_0$ with $t_0<T$ by noting
			that again we get 
			\begin{equation*}
			1-\frac{(T+t_0)\pi(x)}{1-t_0\pi(x)}\ge 1-\frac{\varepsilon}{1-\varepsilon}.\qedhere
			\end{equation*}
		\end{proof}	
%	Hence, in order to prove \cref{le:max} it is sufficient to show the following corollary of \cref{le:EJP}.
	\begin{corollary}\label{corollary1}
		For all $x\in\cX$ and for all $t>T$ it holds
		\begin{equation}
		\P_{\pi}(\tau_x>t-T)\sim \P_\pi(\tau_x>t) . 
		\end{equation} 
	\end{corollary}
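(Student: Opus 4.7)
The plan is to derive \cref{corollary1} as an almost immediate consequence of \cref{le:EJP}. The strategy is a two-sided bound on the ratio $\P_\pi(\tau_x>t)/\P_\pi(\tau_x>t-T)$, one direction being trivial monotonicity and the other being a rewriting of \cref{le:EJP}.

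First, since $\{\tau_x > t\} \subseteq \{\tau_x > t-T\}$ for any $t \ge T$, monotonicity of probability gives
\begin{equation*}
\frac{\P_\pi(\tau_x>t)}{\P_\pi(\tau_x>t-T)} \le 1.
\end{equation*}
For the matching lower bound, fix $t > T$ and apply \cref{le:EJP} after replacing $t$ by $t-T$ (which is nonnegative by assumption). This yields
\begin{equation*}
\frac{\P_\pi(\tau_x>t)}{\P_\pi(\tau_x>t-T)} \ge 1 - o(1),
\end{equation*}
where the $o(1)$ error term is the same $\bar\varepsilon = \frac{1}{2} - \sqrt{\frac{1}{4}-\varepsilon}$ produced in the proof of \cref{le:EJP} and, crucially, depends only on $n$ (through \textbf{(HP 1)} and \textbf{(HP 2)}) and not on $t$.

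Combining the two bounds gives $\P_\pi(\tau_x>t)/\P_\pi(\tau_x>t-T) \to 1$ as $n \to \infty$, uniformly in $t > T$, which is exactly the claim $\P_\pi(\tau_x>t-T) \sim \P_\pi(\tau_x>t)$. There is no real obstacle here; the only point requiring a moment of care is verifying that the $o(1)$ in \cref{le:EJP} is genuinely uniform in the time argument, which is manifest from the induction argument of \cref{le:EJP} (the bound $\bar\varepsilon$ produced there is a function of $n$ alone).
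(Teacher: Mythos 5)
Your proposal is correct and follows essentially the same route as the paper: the paper's proof likewise reduces the claim to the lower bound on the ratio $\P_\pi(\tau_x>t)/\P_\pi(\tau_x>t-T)$ (the upper bound being trivial monotonicity) and obtains it directly from \cref{le:EJP} applied at time $t-T$. Your remark that the $o(1)$ error from \cref{le:EJP} is uniform in the time argument is a correct and worthwhile clarification of a point the paper leaves implicit.
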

	\begin{proof}
		Notice that it is sufficient to show that
		\begin{equation}
		\frac{\P_\pi(\tau_x>t)}{\P_\pi(\tau_x>t-T)}\ge 1-o(1),
		\end{equation}
		which follows immediately by \cref{le:EJP}.
	\end{proof}
	
	\begin{proof}[Proof of \cref{le:max}]
	It	follows immediately by \cref{lemma1,corollary1}.
	\end{proof}
	
%\end{color}

The next proposition relates the expected hitting time of $x$ starting at stationarity, with the same expectation but starting at quasi-stationarity. 

\begin{proposition}\label{pr:mean-pi}
	For all $x\in \cX$
	$$\E_\pi[\tau_x]\sim \E_{\mu^\star_x}[\tau_x]=\frac{1}{1-\lambda_x}.$$
	Hence, by \cref{pr:abd},
	$$1-\lambda_x\sim \frac{\pi(x)}{R_T(x)}.$$
\end{proposition}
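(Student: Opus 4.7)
The plan is to compute $(\mu^\star_x P^T)(x)$ in two different ways, equate the two expressions, and use the a priori bound $\lambda_x^T\sim 1$ supplied by \cref{le:lambda-small} to extract the asymptotic behaviour of $1-\lambda_x$; the conclusion then follows immediately from \cref{pr:abd}. The first computation exploits the quasi-stationary property: since $\P_{\mu^\star_x}(\tau_x=s)=(1-\lambda_x)\lambda_x^{s-1}$ for $s\ge 1$, conditioning on $\tau_x$ and applying the strong Markov property gives
\begin{equation}
(\mu^\star_x P^T)(x)=\sum_{s=1}^T\P_{\mu^\star_x}(\tau_x=s)\,P^{T-s}(x,x)=(1-\lambda_x)\sum_{s=1}^T\lambda_x^{s-1}\,P^{T-s}(x,x).
\end{equation}
By \cref{le:lambda-small} we have $\lambda_x^{s-1}=1-o(1)$ uniformly in $s\in[1,T]$, so the right-hand side equals $(1-\lambda_x)R_{T-1}(x)(1+o(1))$. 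Since $R_T(x)-R_{T-1}(x)=P^T(x,x)\sim\pi(x)$ by \textbf{(HP 1)} and \textbf{(HP 3)}, while $R_T(x)\ge 1$ and $\pi(x)=o(1)$ by \textbf{(HP 2)}, we have $R_{T-1}(x)\sim R_T(x)$, giving $(\mu^\star_x P^T)(x)\sim (1-\lambda_x)R_T(x)$.

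The second computation is a direct consequence of fast mixing. By \textbf{(HP 1)} and the lower bound on $\pi_{\min}$ in \textbf{(HP 3)},
\begin{equation}
\bigl|(\mu^\star_x P^T)(x)-\pi(x)\bigr|\le o(n^{-c})=o(\pi(x)),
\end{equation}
hence $(\mu^\star_x P^T)(x)\sim \pi(x)$. Equating the two expressions yields $(1-\lambda_x)R_T(x)\sim\pi(x)$, and therefore $1-\lambda_x\sim \pi(x)/R_T(x)$. Using $\E_{\mu^\star_x}[\tau_x]=\sum_{t\ge 0}\lambda_x^t=1/(1-\lambda_x)$ together with $\E_\pi[\tau_x]\sim R_T(x)/\pi(x)$ from \cref{pr:abd}, both sides of the desired equivalence are asymptotic to $R_T(x)/\pi(x)$ and the proof is complete.

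The only nontrivial technical input is the uniform control $\lambda_x^{s-1}=1-o(1)$ across the whole window $s\in[1,T]$, which is precisely what \cref{le:lambda-small} guarantees; without this preliminary bound the geometric weights $\lambda_x^{s-1}$ would not decouple cleanly from the return sum, and the identification $1-\lambda_x\sim \pi(x)/R_T(x)$ would not emerge.
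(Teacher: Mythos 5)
Your proof is correct, but it takes a genuinely different route from the paper's. The paper first proves $\E_{\mu^\star_x}[\tau_x]\sim\E_\pi[\tau_x]$ by sandwiching $\E_{\mu^\star_x}[\tau_x]$ between $\sum_{t\ge T}\P_{\mu^\star_x}(\tau_x>t)$ and $T+\sum_{t\ge T}\P_{\mu^\star_x}(\tau_x>t)$, decomposing the tail sum at time $T$, and invoking \cref{le:approx}, \cref{le:max} and \cref{le:lambda-small} to control the resulting error terms; the asymptotics $1-\lambda_x\sim\pi(x)/R_T(x)$ is then read off from \cref{pr:abd}. You go in the opposite order and aim straight at $1-\lambda_x$: the first-passage identity $\P_{\mu^\star_x}(X_T=x)=\sum_{s=1}^T(1-\lambda_x)\lambda_x^{s-1}P^{T-s}(x,x)$ is exact, the geometric weights are uniformly $1-o(1)$ on $s\le T$ by \cref{le:lambda-small}, and $R_{T-1}(x)\sim R_T(x)$ because $P^T(x,x)\sim\pi(x)=o(1)$ while $R_T(x)\ge 1$; matching the resulting $(1-\lambda_x)R_T(x)(1+o(1))$ against the mixing estimate $\P_{\mu^\star_x}(X_T=x)=\pi(x)+o(n^{-c})\sim\pi(x)$ (valid since $c>2$ and \textbf{(HP 3)}) yields $1-\lambda_x\sim\pi(x)/R_T(x)$ directly, and the expectation comparison follows from $\E_{\mu^\star_x}[\tau_x]=(1-\lambda_x)^{-1}$ together with \cref{pr:abd}. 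Your argument dispenses with \cref{le:approx} and \cref{le:max} at this stage (they remain necessary upstream, in the proof of \cref{le:lambda-small}) and is essentially a probabilistic, single-time analogue of Cooper and Frieze's truncated generating-function computation of the return sum, whereas the paper's argument stays at the level of tails and expectations, the form in which it is reused later in the proof of \cref{fvtl}. Both proofs hinge on the same a priori input $\lambda_x^T\sim 1$, and both are complete.
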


In order to prove \cref{pr:mean-pi}, a key ingredient is the following lemma, which states that $1-\lambda_x$ must be much smaller than $T^{-1}$. We will later see that such a rough bound is sufficient to recover the precise first order asymptotic of $\lambda_x$ by comparing $\E_{\mu_x^\star}[\tau_x]$ to $\E_\pi[\tau_x]$.
	\begin{lemma}\label{le:lambda-small}
		For all $x\in\cX$, it holds
		\begin{equation}
		\lambda_x^T\sim 1.
		\end{equation}
	\end{lemma}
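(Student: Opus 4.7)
The plan is to compute the quantity $\mu_x^\star P^T(x)$ in two different ways and compare.  On the one hand, since $\mu_x^\star$ is a probability measure on $\cX\setminus\{x\}$ and is therefore a ``legitimate'' starting distribution, the fast-mixing assumption (HP 1) combined with (HP 3) gives (using the same argument as in the proof of \cref{le:approx}, namely that the additive error $o(n^{-c})$ is negligible compared with $\pi(x) \ge \pi_{\min} = \omega(n^{-2})$ since $c>2$)
$$ \mu_x^\star P^T(x) = \pi(x)(1+o(1)). $$
On the other hand, decomposing the event $\{X_T=x\}$ according to the first passage time to $x$ and using the quasi-stationarity identity $\P_{\mu_x^\star}(\tau_x=s) = (1-\lambda_x)\lambda_x^{s-1}$, we have
$$ \mu_x^\star P^T(x) = (1-\lambda_x)\sum_{u=0}^{T-1}\lambda_x^{T-1-u}P^u(x,x). $$

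Bounding the sum on the right-hand side from below by the single $u=0$ term (using $P^0(x,x)=1$) gives $(1-\lambda_x)\lambda_x^{T-1} \le \pi(x)(1+o(1))$, and bounding it from above by replacing every factor $\lambda_x^{T-1-u}$ by $1$ produces the dual inequality $(1-\lambda_x)R_{T-1}(x) \ge \pi(x)(1-o(1))$.  Combined with (HP 2), that is $T\pi(x)=o(1)$, the first inequality reads $(1-\lambda_x)\lambda_x^{T-1}=o(1/T)$.  Since the one-variable function $h(\lambda)=(1-\lambda)\lambda^{T-1}$ attains its maximum $\sim 1/(eT)$ near $\lambda=1-1/T$ and decays away from it, this forces $\lambda_x$ to lie either near $0$ or near $1$; equivalently, we get the dichotomy $\lambda_x^T=o(1)$ or $\lambda_x^T\ge 1-o(1)$.

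To exclude the first branch I would use \cref{le:max} at $t=2T$: since $\max_y \P_y(\tau_x>2T) \sim \P_\pi(\tau_x>2T) \ge 1-(2T+1)\pi(x)\to 1$, the sub-multiplicative bound $\P_y(\tau_x>2T)\le \P_y(\tau_x>T)\max_z \P_z(\tau_x>T)$ implies $\max_y \P_y(\tau_x>T)\to 1$.  A Perron--Frobenius-type argument applied to $[P]_x^T$ with the positive test vector $y\mapsto \P_y(\tau_x>kT)$ for a large constant $k$ (noting that, for $k$ large, this function is approximately proportional to $\gamma_x$) then upgrades this pointwise maximum into a constant-order lower bound on $\lambda_x^T=\sum_y \mu_x^\star(y)\P_y(\tau_x>T)$, ruling out the $\lambda_x^T\to 0$ scenario.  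With this constant-order lower bound in hand, the inequality $(1-\lambda_x)\lambda_x^{T-1}\le \pi(x)(1+o(1))$ becomes $1-\lambda_x = O(\pi(x))$, and therefore $T(1-\lambda_x)=o(1)$ by (HP 2), yielding $\lambda_x^T\ge 1-T(1-\lambda_x)\to 1$.

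The hard part is the bootstrap step: transferring the ``best-case'' information provided by \cref{le:max} (which controls the chain started from the most favorable state) into an averaged statement against $\mu_x^\star$.  Doing this requires some a priori control on how $\mu_x^\star$ distributes its mass across $\cX\setminus\{x\}$, which is exactly the structural information that the FVTL is designed to deliver; here one has to extract it from the weaker data at hand, iterating the above inequalities against successively better initial bounds until one reaches the regime in which the linearization $\lambda_x^T\approx 1-T(1-\lambda_x)$ can be invoked.
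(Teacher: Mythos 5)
Your renewal identity $\mu_x^\star P^T(x)=(1-\lambda_x)\sum_{u=0}^{T-1}\lambda_x^{T-1-u}P^u(x,x)$ and the estimate $\mu_x^\star P^T(x)=\pi(x)(1+o(1))$ are both correct (the latter by the same computation as in \cref{le:approx}), and so is the dichotomy you draw from $(1-\lambda_x)\lambda_x^{T-1}\le\pi(x)(1+o(1))=o(1/T)$; in spirit this is Cooper--Frieze's generating-function relation restricted to the real axis. But the proof is incomplete exactly where you admit it is: excluding the branch $\lambda_x^T=o(1)$. Knowing $\max_y\P_y(\tau_x>T)\to 1$ (which already follows from $\P_\pi(\tau_x>T)\ge 1-(T+1)\pi(x)$, without \cref{le:max}) gives no control on the average $\lambda_x^T=\sum_{y\ne x}\mu_x^\star(y)\P_y(\tau_x>T)$, since a priori $\mu_x^\star$ could concentrate on states from which $x$ is hit within time $T$. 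Your proposed fix---testing against $y\mapsto\P_y(\tau_x>kT)$ for a large constant $k$ and asserting that this is approximately proportional to $\gamma_x$---is not justified: the convergence $\P_y(\tau_x>t)/\lambda_x^t\to\gamma_x(y)$ is a fixed-$n$, $t\to\infty$ statement with no uniformity in $n$ at $t=kT$, and even granting it you would still need quantitative spread of $\gamma_x$, or closeness of $\mu_x^\star$ to $\pi$, which in this paper are obtained only \emph{after} the FVTL (\cref{th:gamma}, \cref{co:pidotgamma}, and the estimate $\mu_x^\star(y)\lesssim\pi(y)$ in \cref{sec:conditional} all use \cref{le:lambda-small} itself). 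As stated, the bootstrap is circular, and ``iterating against successively better bounds'' is not an argument.

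The paper sidesteps the dichotomy entirely by a decomposition in which the unknown cancels multiplicatively: write $\lambda_x^{2T}=\P_{\mu_x^\star}(\tau_x>2T)=\sum_{z\ne x}\P_{\mu_x^\star}(X_T=z,\,\tau_x>T)\,\P_z(\tau_x>T)$, replace $\P_{\mu_x^\star}(X_T=z)$ by $\pi(z)(1+o(1))$ using mixing, and bound the subtracted correction by $\P_{\mu_x^\star}(\tau_x\le T)\max_z\P_z(\tau_x>T)=(1-\lambda_x^T)\max_z\P_z(\tau_x>T)\sim(1-\lambda_x^T)\P_\pi(\tau_x>T)$, the last step by \cref{le:max}. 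This yields $\lambda_x^{2T}\gtrsim\P_\pi(\tau_x>T)\,\lambda_x^T$, so the factor $\lambda_x^T$ cancels regardless of its size, and $\lambda_x^T\gtrsim\P_\pi(\tau_x>T)\ge 1-(T+1)\pi(x)\to 1$ by \textbf{(HP 2)}, with no a priori information on $\mu_x^\star$ needed. If you want to keep your renewal identity, you must either import a cancellation of this kind or prove the missing lower bound on $\P_{\mu_x^\star}(\tau_x>T)$ by some other genuine argument; as written, that step is a gap.
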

\begin{proof}
	Start by noting that
	\begin{align}
	\lambda_x^{2T}=&\P_{\mu^\star_x}(\tau_x>2T)\\
	=&\sum_{z\neq x}\P_{\mu^\star_x}\(X_T=z,\:\tau_x>T \)\P_z\(\tau_x>T \)\\
	=&\sum_{z\neq x}\[\P_{\mu^\star_x}\(X_T=z\)-\P_{\mu^\star_x}\(\:X_T=z,\:\tau_x\le T \) \]\P_z\(\tau_x>T \)\\
\text{\cref{le:approx}	}\Longrightarrow\quad	\sim&\P_\pi(\tau_x>T)-\sum_{z\neq x}\P_{\mu^\star_x}\(X_T=z,\:\tau_x\le T \)\P_z\(\tau_x>T \)\\
		\ge&\P_\pi(\tau_x>T)-\max_z \P_z\(\tau_x>T \)\P_{\mu^\star_x}\(\tau_x\le T \)\\
\text{	\cref{le:max}}\Longrightarrow\quad\sim&\P_\pi(\tau_x>T)\(1-\P_{\mu^\star_x}\(\tau_x\le T \) \)\\
		=&\P_\pi(\tau_x>T)\(1-(1-\lambda_x^T) \).
	\end{align}
Hence
	\begin{align}
	\lambda_x^{T}\ \gtrsim \P_\pi(\tau_x>T)\ge 1-(T+1)\pi(x),
	\end{align}
	so, by \textbf{(HP 2)} we can conclude that
$
	\lambda_x^T\sim 1.
$
\end{proof}
\begin{proof}[Proof of \cref{pr:mean-pi}]
	We start with the trivial bounds
	\begin{equation}
	\sum_{t=T}^{\infty}\P_{\mu^\star_x}(\tau_x>t)\le \E_{\mu^\star_x}[\tau_x]\le T+	\sum_{t=T}^{\infty}\P_{\mu^\star_x}(\tau_x>t).
	\end{equation}
	We further notice that
	\begin{align}
	\sum_{t=T}^{\infty}\P_{\mu^\star_x}(\tau_x>t)=&\sum_{z}\P_{\mu^\star_x}(X_T=z,\tau_x>T)\sum_{t=0}^{\infty}\P_z(\tau_x>t)\\
	=&\sum_{z}\[\P_{\mu^\star_x}(X_T=z)-\P_{\mu^\star_x}(X_T=z,\tau_x\le T)\]\sum_{t=0}^{\infty}\P_z(\tau_x>t)\\
	=&\sum_{z}\[\pi(z)(1+o(1))-\P_{\mu^\star_x}(X_T=z,\tau_x\le T)\]\sum_{t=0}^{\infty}\P_z(\tau_x>t)\\
	\label{last}=&(1+o(1))\E_\pi[\tau_x]-\sum_{z}\P_{\mu^\star_x}(X_T=z,\tau_x\le T)\sum_{t=0}^{\infty}\P_z(\tau_x>t).
	\end{align}
It follows immediately by \cref{last} that
\begin{equation}
	\sum_{t=T}^{\infty}\P_{\mu^\star_x}(\tau_x>t)\le (1+o(1)) \E_\pi[\tau_x].
\end{equation}
On the other hand, 
\begin{equation}
\sum_{z}\P_{\mu^\star_x}(X_T=z,\tau_x\le T)\sum_{t=0}^{\infty}\P_z(\tau_x>t)\le \P_{\mu_x^\star}(\tau_x\le T) \cdot \sum_{t=0}^{\infty}\max_z\P_z(\tau_x>t)
\end{equation}
and thanks to \cref{le:max} we get
\begin{align}
\sum_{t=0}^{\infty}\max_z\P_z(\tau_x>t)
%=&\sum_{t=0}^{\infty}\sup_z\P_z(\tau_x>t)\\
\le&T+\sum_{t=T}^{\infty}\P_\pi(\tau_x>t)=(1+o(1))\E_\pi[\tau_x].
\end{align}
At this point, the proof is complete since
\begin{equation}
\P_{\mu_x^\star}(\tau_x\le T)=1-\lambda_x^T=o(1),
\end{equation}
where the latter asymptotics follows from \cref{le:lambda-small}.
\end{proof}
We are now in shape to prove the main result.
\begin{proof}[Proof of \cref{fvtl}]
	We start by bounding each entry of the $T$-step evolution of the quasi-stationary measure. From above we have the trivial bound: for all $x,y\in\cX$
\begin{equation}\label{eq:bound-evo}
\mu_T^{\mu^\star_x}(y)\ge \lambda^T_x\mu_x^\star(y).
\end{equation}
The latter immediately implies that for all $x\in \cX$ and $t>0$ it holds
	\begin{equation}\label{eq:pr-lb}
	\P_\pi(\tau_x>t)\gtrsim\lambda_x^{t+T}\sim \lambda_x^t.
	\end{equation}
In fact, by \cref{le:approx},
\begin{equation}
\P_{\pi}(\tau_x>t)\sim \P_{\mu_T^{\mu_x^\star}}(\tau_x>t)\ge \lambda_x^T \P_{\mu_x^\star}(\tau_x>t)=\lambda_x^{t+T}.
\end{equation}
To conclude the proof, we show a matching upper bound. Component-wise, we can upper bound
\begin{align}
\mu_T^{\mu_x^\star}(y)=&\lambda_x^T\mu_x^\star(y)+(1-\lambda_x)\sum_{s=1}^T\lambda_x^s\mu_{T-s}^x(y)\\
\label{eq:ub}\le& \lambda_x^T\mu_x^\star(y)+(1-\lambda_x)\E_x[\zeta_T(y)],
\end{align}
where $\zeta_T(y)$ denotes the local time spent by the chain in the state $y$ within time $T$, i.e.
\begin{equation}
\zeta_T(y):=\sum_{s=1}^T\ind_{X_t=y}.
\end{equation}
Notice that for all $x,y\in\cX$,  holds
\begin{equation}\label{eq:blue}
\sum_{y\in\cX}\E_x[\zeta_T(y)]= T.
\end{equation}
Hence
\begin{align}
\P_\pi(\tau_x>t)\sim&\P_{\mu_T^{\mu_x^\star}}(\tau_x>t)\\
\le&\sum_{y\in\cX}\lambda_x^T\mu_x^\star(y)\P_y(\tau_x>t)+(1-\lambda_x)\sum_{y\in\cX}\E_x[\zeta_T(y)]\P_y(\tau_x>t)\\
\le&\lambda_x^{t+T}+(1-\lambda_x)T\max_{y}\P_y(\tau_x>t)\\
=& \lambda_x^{t+T}+o\(\P_\pi(\tau_x>t)\)\\
\end{align}
where in the latter asymptotic equality we used \cref{le:lambda-small,le:max}. We then conclude that for all $x\in \cX$ and $t>T$ it holds
	\begin{equation*}
	\P_\pi(\tau_x>t)\lesssim \lambda_x^t.\qedhere
	\end{equation*}
\end{proof}

\begin{color}{black}
	\section{Controlling the Doob's transform}\label{sec:doob}
We start the section by showing that the unique vector $\gamma_x$ defined by the requirements
\begin{equation}
\lambda_x\gamma_x=[P]_x\gamma_x,\qquad\pl \mu^\star_x,\gamma_x\pr =1,
\end{equation}
can be equivalently characterized by the limits
\begin{equation}\label{419}
\gamma_x(y)=\lim_{t\to\infty}\frac{\P_y(\tau_x>t)}{\lambda_x^t},\qquad\forall y\neq x.
\end{equation}
In fact, it is an immediate consequence of \cref{eq:gamma-ratio} and $|\cX|<\infty$ that for every measure $\alpha,\alpha'$ on $\cX$, defining $\gamma_x(x)=0$ and assuming $\alpha\neq\delta_x$,  holds
\begin{equation}\label{eq:defnew2}
\frac{\pl \gamma_x,\alpha'\pr}{\pl \gamma_x,\alpha\pr }=\lim_{t\to\infty}\frac{\P_{\alpha'}(\tau_x>t)}{\P_\alpha(\tau_x>t)}.
\end{equation}
Hence, choosing $\alpha=\mu_x^\star$ and $\alpha'=\delta_y$ in the latter display we get \cref{419}. Moreover, choosing $\alpha=\mu_x^\star$ and $\alpha'=\pi$ and making use of \cref{fvtl} we get indeed the claim in \cref{co:pidotgamma}.

We now aim at proving \cref{th:gamma}. We discuss the upper and the lower bound separately. In order to ease the reading, in what follows we consider the target vertex, $x$, to be fixed.
\begin{lemma}\label{le:ub-gamma}
For all $\varepsilon>0$ and $x\in\cX$ it holds
\begin{equation}
\max_{y\in\cX\setminus\{x\}}\gamma_x(y)\le 1+\varepsilon
\end{equation}
\end{lemma}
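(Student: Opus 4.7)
The plan is to combine three ingredients already in hand. First, the pointwise identity $\gamma_x(y) = \lim_{t \to \infty} \P_y(\tau_x > t)/\lambda_x^t$ recorded in \cref{419}, which holds for each fixed $n$ by the Perron--Frobenius theorem applied to the primitive kernel $[P]_x$. Second, the uniform tail control from \cref{fvtl}, i.e.\ $\sup_{t \ge 0} |\P_\pi(\tau_x>t)/\lambda_x^t - 1| \to 0$. Third, the universal-starting-point estimate from \cref{le:max}, giving $\max_{y \neq x} \P_y(\tau_x > t) \sim \P_\pi(\tau_x > t)$ for $t > T$.

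Concretely, I would chain these to obtain, for every $t > T$ and every $y \neq x$,
$$
\frac{\P_y(\tau_x>t)}{\lambda_x^t} \;\le\; \frac{\max_{z \neq x}\P_z(\tau_x>t)}{\lambda_x^t} \;\le\; (1+\eta_n)\,\frac{\P_\pi(\tau_x>t)}{\lambda_x^t} \;\le\; (1+\eta_n)(1+\eta_n'),
$$
where $\eta_n, \eta_n' \to 0$ as $n \to \infty$ and---this is the key point---neither depends on $t$. For each fixed $n$ one then lets $t \to \infty$, invokes \cref{419} to identify the left-hand side with $\gamma_x(y)$, and obtains $\max_{y \neq x}\gamma_x(y) \le (1+\eta_n)(1+\eta_n')$. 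Taking $n$ large enough to make the right-hand side at most $1+\varepsilon$ closes the proof.

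The only subtle point, and essentially the main obstacle, is the uniformity in $t$ of the two error terms, which is what lets us interchange the limits $t \to \infty$ (inside the definition of $\gamma_x$) and $n \to \infty$ (in the asymptotic estimates). The uniformity in \cref{fvtl} is built into the statement via the supremum over $t$. The uniformity in \cref{le:max} has to be read off its proof: \cref{lemma1} inherits its error from \cref{le:approx}, whose relative bound $o(n^{-c+2})$ is independent of $t$, while \cref{corollary1} inherits its error from the iteration in \cref{le:EJP}, where the bound $y_i \ge 1 - \bar\varepsilon$ is uniform in $i$. Once this is recorded, no additional probabilistic input is needed.
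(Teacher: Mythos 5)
Your proposal is correct, and it reaches the bound by a slightly different route than the paper's own proof. The paper does not invoke \cref{le:max} here: it runs a self-contained decomposition of $\P_y(\tau_x>t)$ according to whether the minimal strong stationary time $\tau_\pi^y$ occurs by time $T$, obtains $\max_{y\neq x}\P_y(\tau_x>t)\lesssim \P_\pi(\tau_x>t-T)$ with the remainder shown to be negligible, and then passes to the limit in \cref{419}, absorbing the shift by $T$ via \cref{fvtl} together with $\lambda_x^T\sim 1$ from \cref{le:lambda-small}. You instead reuse \cref{le:max} (equivalently \cref{lemma1} plus \cref{corollary1}) to compare directly with $\P_\pi(\tau_x>t)$, so the $T$-shift is handled on the probability side rather than through the eigenvalue, and then conclude with \cref{fvtl} and \cref{419}. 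The one point that genuinely requires justification on your route---and which you correctly identify and settle---is the uniformity in $t>T$ of the error in \cref{le:max}, since its statement is only an asymptotic relation for each admissible $t$: reading the proofs, the error in \cref{lemma1} comes from \cref{le:approx} and is $t$-free, while the error in \cref{corollary1} comes from the iteration in \cref{le:EJP}, where the bound $y_i\ge 1-\bar\varepsilon$ depends only on $n$; this legitimizes interchanging the limits $t\to\infty$ and $n\to\infty$. There is also no circularity, since \cref{fvtl} and \cref{le:max} are established in \cref{sec:proof} without any reference to $\gamma_x$. In substance the two arguments use the same ingredients (strong stationary times, the separation bound at time $T$, and the uniform-in-$t$ statement of \cref{fvtl}); yours is more economical because it recycles \cref{le:max}, whereas the paper's version redoes the comparison explicitly and thereby keeps the main term and the remainder quantified within the proof itself.
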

\begin{proof}
Rewrite
\begin{align}\label{420}
\max_{y\in\cX\setminus\{x\}}\P_y(\tau_x>t)\le&\max_{y\in\cX\setminus\{x\}}\P_y\(\tau_x>t;\:\tau_\pi^y\le T \)+ \max_{y\in\cX\setminus\{x\}}\P_y\(\tau_x>t;\:\tau_\pi^y>T \)\\
\label{421}\le&\P_\pi\(\tau_x>t-T \)+ \max_{y\in\cX\setminus\{x\}}\P_y\(\tau_x>t;\:\tau_\pi^y>T \).
\end{align}
We aim at showing that
\begin{equation}\label{422}
\max_{y\in\cX\setminus\{x\}}\P_y\(\tau_x>t;\:\tau_\pi^y>T \)=o\big( \P_\pi\(\tau_x>t-T \) \big).
\end{equation}
We decompose the latter by its position at time $T$, i.e.,
\begin{align}
\max_{y\in\cX\setminus\{x\}}\P_y\(\tau_x>t;\:\tau_\pi^y>T \)=&\max_{y\in\cX\setminus\{x\}}\sum_{z\in\cX\setminus\{x \}}\P_y\(\tau_x>T;\:X_T=z;\:\tau_\pi^y>T \)\:\P_z\(\tau_x>t-T \)\\
\le&\bigg(\max_{z\in\cX\setminus\{x\}}\P_z\(\tau_x>t-T \)\bigg)\cdot\bigg( \max_{y\in\cX\setminus\{x\}}\P_y(\tau_\pi^y>t)\bigg)\\
\sim&\:\P_\pi\(\tau_x>t-T \)\cdot \max_{y\in\cX}\P_y(\tau_\pi^y>t)\\
=&\:o\big(\P_\pi\(\tau_x>t-T \) \big).
\end{align}
By inserting the bounds in \cref{421,422} into \cref{419} we deduce that
\begin{align}
\gamma_x(y)=\lim_{t\to\infty}\frac{\P_y(\tau_x>t)}{\lambda_x^t}\lesssim &\lim_{t\to\infty}\frac{\P_\pi(\tau_x>t-T)}{\lambda_x^t}=1+o(1).\qedhere
\end{align}
\end{proof}

\begin{lemma}\label{le:lb-gamma}
For all $\varepsilon>0$ and $x,y\in\cX$ with $x\neq y$ it holds
	\begin{equation}
	\gamma_x(y)\ge 1-\varepsilon-\E_y[\zeta_T(x)].
	\end{equation}
\end{lemma}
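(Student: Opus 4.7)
The plan is to use the characterization of $\gamma_x$ via the limit
$$\gamma_x(y)=\lim_{t\to\infty}\frac{\P_y(\tau_x>t)}{\lambda_x^t},$$
established at the start of \cref{sec:doob} in \cref{419}. So it suffices to produce an asymptotic lower bound on $\P_y(\tau_x>t)/\lambda_x^t$ involving $\E_y[\zeta_T(x)]$, valid uniformly in $t$ (large) and uniformly in $y\neq x$.

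First I would decompose $\P_y(\tau_x>t)$ according to the position of the chain at the mixing time $T$, for $t>T$:
$$\P_y(\tau_x>t)=\sum_{z\neq x}\P_y\!\left(\tau_x>T,\,X_T=z\right)\P_z(\tau_x>t-T).$$
Writing $\P_y(\tau_x>T,X_T=z)=\P_y(X_T=z)-\P_y(\tau_x\le T,\,X_T=z)$ and plugging back in, the positive piece reassembles as $\P_{\mu_T^y}(\tau_x>t-T)$, while the subtracted piece is bounded from above by $\P_y(\tau_x\le T)\cdot\max_{z\neq x}\P_z(\tau_x>t-T)$. For the first piece, \cref{le:approx} gives $\P_{\mu_T^y}(\tau_x>t-T)\sim\P_\pi(\tau_x>t-T)$. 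For the second, \cref{le:max} gives $\max_{z\neq x}\P_z(\tau_x>t-T)\sim\P_\pi(\tau_x>t-T)$. The factor $\P_y(\tau_x\le T)$ is then controlled by Markov's inequality: since hitting $x$ by time $T$ entails at least one visit to $x$ in the first $T+1$ steps, $\P_y(\tau_x\le T)\le \E_y[\zeta_T(x)]+\mu_T^y(x)$, and the boundary term $\mu_T^y(x)\le \pi(x)(1+o(1))=o(1)$ by \textbf{(HP 1)}\textendash\textbf{(HP 3)}.

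Combining these estimates and invoking \cref{fvtl} together with $\lambda_x^T\sim 1$ from \cref{le:lambda-small} (so that $\P_\pi(\tau_x>t-T)\sim\lambda_x^{t-T}\sim\lambda_x^t$), I obtain
$$\P_y(\tau_x>t)\ge \lambda_x^t\Bigl[\bigl(1-o(1)\bigr)-\bigl(1+o(1)\bigr)\E_y[\zeta_T(x)]\Bigr],$$
where the $o(1)$ errors depend only on $n$, not on $y$ nor on $t$. Dividing through by $\lambda_x^t$ and letting $t\to\infty$ while keeping $n$ fixed (but large), the left-hand side converges to $\gamma_x(y)$ and one reads off $\gamma_x(y)\ge 1-\varepsilon-\E_y[\zeta_T(x)]$ for any prescribed $\varepsilon>0$ and all $n$ large enough, uniformly in $y\neq x$.

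The only real subtlety—and it is mild—is bookkeeping the $o(1)$ corrections uniformly in $y$ so that they can all be absorbed into a single $\varepsilon$, rather than allowing them to depend on the possibly large magnitude of $\E_y[\zeta_T(x)]$. Since the error terms in \cref{le:approx,le:max,le:lambda-small,fvtl} are all uniform functions of $n$ alone, this poses no obstacle. A second minor point is the identification of $\P_y(\tau_x\le T)$ with a cumulative local-time expectation; this is handled by the elementary Markov-type estimate above, and the off-by-one boundary effect contributes only a term of order $\pi(x)$.
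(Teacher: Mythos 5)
Your argument is correct, but it takes a genuinely different route from the paper's. The paper proves the lower bound by introducing the minimal strong stationary time $\tau_\pi^y$ and restricting to the event $\{\tau_\pi^y\le T\}$: since at that (random) time the position is exactly $\pi$-distributed, one gets $\P_y(\tau_x>t)\ge\P_\pi(\tau_x>t)\,\P_y(\tau_x>\tau_\pi^y;\,\tau_\pi^y\le T)$, and the last factor is bounded below by $1-\varepsilon-\E_y[\zeta_T(x)]$ using $\P_y(\tau_\pi^y>T)=\text{sep}(\mu_T^y,\pi)=o(1)$ together with the same union bound $\P_y(\tau_x\le T)\le\sum_{s\le T}\P_y(X_s=x)$ that you use. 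You instead split at the deterministic time $T$, write $\P_y(\tau_x>T,X_T=z)=\P_y(X_T=z)-\P_y(\tau_x\le T,X_T=z)$, and control the two pieces with \cref{le:approx} and \cref{le:max} before passing to the limit via \cref{419}; this trades the SST machinery for the already-established mixing lemmas (making the step more self-contained and parallel to the rest of \cref{sec:proof}), at the price of invoking \cref{le:max} and of producing a multiplicative $(1+o(1))$ in front of $\E_y[\zeta_T(x)]$. That factor is harmless, but your closing remark does not quite dispose of it: the error $o(1)\cdot\E_y[\zeta_T(x)]$ is not uniform in $y$ as written. The clean fix is a one-line case distinction: if $\E_y[\zeta_T(x)]\ge 1$ the claimed inequality is vacuous because $\gamma_x(y)\ge 0$ (Perron--Frobenius, or the limit representation in \cref{419}), and otherwise $\E_y[\zeta_T(x)]<1$ so the extra term is a genuine $o(1)$ absorbed into $\varepsilon$. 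With that observation, your off-by-one treatment of $\mu_T^y(x)=o(1)$ and the final limit $t\to\infty$ are all sound, and the conclusion matches the statement (and the positive-part formulation in \cref{th:gamma}).
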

\begin{proof}
	By the same argument of the proof of \cref{le:ub-gamma} it is sufficient to show that for all $\varepsilon>0$
	\begin{equation}
	\P_y(\tau_x>t)\ge (1-\varepsilon-\E_y[\zeta_T(x)])\P_\pi\(\tau_x>t\).
	\end{equation}
	Rewrite
	\begin{align}\label{428}
\P_y(\tau_x>t)\ge &\P_y\(\tau_x>t;\:\tau_\pi^y\le T\)\\
=&\sum_{s\le T}\P_y\(\tau_x>s;\:\tau_\pi^y=s\)\P_{\pi}(\tau_x>t-s)\\
\ge&\P_{\pi}(\tau_x>t)\sum_{s\le T}\P_y\(\tau_x>s;\:\tau_\pi^y=s\)\\
=&\P_{\pi}(\tau_x>t)\P_y(\tau_x>\tau_\pi^y;\tau_\pi^y\le T)
	\end{align}
	we are left with showing that
	\begin{align}
	\P_y(\tau_x>\tau_\pi^y;\tau_\pi^y\le T)\ge& \P_y(\tau_x>T)-\P_y(\tau_\pi^y>T)\\
	=&1-\P_y(\tau_x\le T)-\varepsilon\\
	=&1-\varepsilon-\sum_{s\le T}\P_y(\tau_x=s)\\
	\ge&1-\varepsilon-\sum_{s\le T}\P_y(X_s=x)\\
	=& 1-\varepsilon- \E_y[\zeta_T(x)].\qedhere
	\end{align}
\end{proof}
	
\section{A random time perspective on the FVTL}\label{sec:conditional}
\begin{color}{black}
	
	Besides the rough bounds in \cref{eq:bounds} it is possible to have a probabilistic identity that defines the tail probability of the event $\tau_x>t$ when the Markov chain starts at $\alpha$.
	In order to provide such a representation, it has been introduced in
	\cite{MS}   the notion of \emph{conditional strong quasi-stationary time} as extension of the idea of
	\emph{ strong stationary time} introduced in \cite{ADuniform} , see also  \cite{DFduality,LevPer:AMS2017}.
	In this last section, we aim at showing how the assumptions leading to the validity of the FVTL reflect on the theory of CSQST and on the mixing behavior of the Doob's transform.
	
	Consider an irreducible Markovian kernel $P$ and a state $x\in\cX$  such that $[P]_x$ is irreducible and sub-Markovian. A randomized stopping time $\tau^\alpha_\star$ is a \emph{Conditional Strong Quasi Stationary Time (CSQST)} 
	if for any $y \in \cX\setminus\{x\}$, and $t\ge 0$
	\begin{equation}\label{defCSQST'}
	\P_{\alpha}(X_t=y,\, \tau^{\alpha}_\star=t)=\mu^\star_x(y)\P_\alpha(\tau^{\alpha}_\star=t<\tau_x).
	\end{equation}
	In other words,  $\tau^\alpha_\star$ is a CSQST if for any $y \in \cX\setminus\{x\}$, and $t\ge 0$
	\begin{equation}\label{CSQST}
	\P_{\alpha} \left( X^\alpha_t = y, \tau^\alpha_\star=t \ | \ t < \tau_x \right) 
	= 
	\mu^\star_x(y) \P_{\alpha} \left(  \tau^\alpha_\star=t \ | \ t < \tau_x \right) 
	\end{equation}
	which is equivalent to
	\begin{equation}\label{CSQSTagg}
	\P_\alpha \left( X_{\tau^\alpha_\star} = y\mid \tau^\alpha_\star< \tau_x\right) 
	= 
	\mu^\star_x(y).
	\end{equation}

	By \cref{eq:bounds} we deduce that for any initial distribution $\alpha$ on $\cX\setminus\{x\}$ and
	for any CSQST  $\tau^{\alpha}_\star$   we have for any $t\ge 0$:
	$$
	\P_\alpha(\tau^{\alpha}_\star\le t<\tau_x)=\sum_{u\le t}\lambda^{t-u}\P_\alpha(\tau^{\alpha}_\star=u<\tau_x)\le  \lambda_x^t\pl\alpha,\gamma_x\pr(1-\tilde s^{\tilde\alpha}(t)).
	$$
	
	This suggests a new notion of minimality: a
	conditional strong quasi stationary
	time  $\tau^{\alpha}_\star$   is \emph{minimal } if for any $t\ge 0$
	$$
	\P_\alpha(\tau^{\alpha}_\star\le t<\tau_x)= \lambda_x^t\pl \alpha,\gamma_x\pr (1-\tilde s^{\tilde\alpha}(t)).
	$$
	The existence of minimal CSQSTs is proved in \cite{MS} where it is shown the validity of the following representation formula: for any minimal CSQST  $\tau_\star^\alpha$ and for any $t\ge 0$:
	\begin{equation}\label{eq:representation}
	\P_{\alpha}\Big(\tau_x>t\Big)=\lambda_x^t\pl\alpha,\gamma_x\pr(1-\tilde s^{\tilde\alpha}(t))+\P_{\alpha}\Big( \tau^{\alpha}_{\star,x}> t\Big),
	\end{equation}
	where
	$$\tau^{\alpha}_{\star,x}:={\tau_x\wedge \tau^{\alpha}_\star}.$$
	
	As a byproduct of the FVTL and of \cref{eq:representation} it is possible to show the following result.
	\begin{proposition}\label{pr:last}
		Under the assumptions of the FVTL there exists a minimal CSQST $\tau_{\star,x}^\pi$ such that
		\begin{equation}
		\P_\pi(\tau_{\star,x}^\pi=0)\to 1.
		\end{equation}
	\end{proposition}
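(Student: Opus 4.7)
The plan is to apply the representation formula \eqref{eq:representation} at $t=0$ with $\alpha=\pi$ for any minimal CSQST $\tau_\star^\pi$ (whose existence is guaranteed in \cite{MS}), and reduce the claim to an easy pointwise comparison between $\pi$ and $\mu^\star_x$ on $\cX\setminus\{x\}$.

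The first step is purely algebraic. Writing $\{\tau^\pi_{\star,x}=0\}=\{\tau_x=0\}\cup\{\tau^\pi_\star=0\}$, one gets $\P_\pi(\tau^\pi_{\star,x}=0)=\pi(x)+\P_\pi(\tau^\pi_\star=0<\tau_x)$, and the minimality condition at $t=0$ identifies the second summand with $\pl\pi,\gamma_x\pr(1-\tilde s^{\tilde\pi}(0))$. Since $\widetilde P^0=\mathrm{Id}$ gives $\tilde s^{\tilde\pi}(0,y)=1-\tilde\pi(y)/\nu(y)$, plugging in $\tilde\pi(y)=\pi(y)\gamma_x(y)/\pl\pi,\gamma_x\pr$ and $\nu(y)=\mu^\star_x(y)\gamma_x(y)$ makes both $\gamma_x(y)$ and $\pl\pi,\gamma_x\pr$ cancel, yielding the clean identity
$$\P_\pi(\tau^\pi_{\star,x}=0)=\pi(x)+\min_{y\neq x}\frac{\pi(y)}{\mu^\star_x(y)}.$$
Using \textbf{(HP 2)} to discard $\pi(x)=o(1)$, the whole claim reduces to proving the uniform pointwise bound $\mu^\star_x(y)\le(1+o(1))\pi(y)$ for every $y\neq x$.

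The main step is this uniform bound. I would iterate the quasi-stationarity relation $T$ times to get $\lambda_x^T\mu^\star_x(y)=\sum_z\mu^\star_x(z)[P]_x^T(z,y)\le \sum_z\mu^\star_x(z)P^T(z,y)$, and then apply \textbf{(HP 1)} to the convex combination to bound the right-hand side by $\pi(y)+o(n^{-c})$. Since $\lambda_x^T\sim 1$ by \cref{le:lambda-small}, and \textbf{(HP 3)} combined with $c>2$ yields $n^{-c}/\pi(y)=o(n^{-c+2})=o(1)$ uniformly in $y$, dividing through gives $\mu^\star_x(y)/\pi(y)\le 1+o(1)$ uniformly. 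This yields $\min_{y\neq x}\pi(y)/\mu^\star_x(y)\ge 1-o(1)$, which together with the trivial bound $\P_\pi(\tau^\pi_{\star,x}=0)\le 1$ closes the argument.

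The hard part is really this uniform comparison $\mu^\star_x\le(1+o(1))\pi$: it is precisely here that all three hypotheses are used in concert, through \cref{le:lambda-small}, to ensure that the additive mixing error $n^{-c}$ is negligible against the smallest stationary weights. No genuinely new estimate beyond those already developed for the FVTL itself is needed; the proposition is essentially a clean reformulation of the fact that, under \textbf{(HP 1)}--\textbf{(HP 3)}, the quasi-stationary measure is uniformly close to $\pi$ on $\cX\setminus\{x\}$.
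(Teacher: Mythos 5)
Your proof is correct, and it reaches the conclusion by a shorter route than the paper's. Both arguments rest on the same two inputs: the existence of a minimal CSQST with its minimality identity (from \cite{MS}), and the uniform comparison $\mu^\star_x(y)\le(1+o(1))\pi(y)$ for all $y\neq x$, which you establish exactly as the paper does (iterate quasi-stationarity $T$ steps, drop the killing at $x$, apply \textbf{(HP 1)} and \textbf{(HP 3)} as in \cref{le:approx}, and divide by $\lambda_x^T\sim 1$ via \cref{le:lambda-small}). The difference lies in the reduction. The paper uses the representation formula \eqref{eq:representation} uniformly in $t$, invokes the full FVTL (\cref{fvtl}) and \cref{co:pidotgamma} to isolate $\tilde s^{\tilde\pi}(t)$, and then uses monotonicity of the separation distance to reduce to $t=0$. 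You instead evaluate the minimality identity directly at $t=0$, where $\widetilde P^0=\mathrm{Id}$ makes $\gamma_x(y)$ and $\pl\pi,\gamma_x\pr$ cancel exactly, yielding the identity $\P_\pi(\tau^\pi_{\star,x}=0)=\pi(x)+\min_{y\neq x}\pi(y)/\mu^\star_x(y)$; this bypasses \cref{fvtl} and \cref{co:pidotgamma} altogether. Your exact formula also makes transparent which direction of comparison is needed, namely $\mu^\star_x\lesssim\pi$ pointwise (the paper's intermediate display for $\mathrm{sep}(\nu_x,\tilde\pi)$ writes the ratio the other way around, though its final computation proves precisely the bound your identity calls for). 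What the paper's longer route buys is the uniform statement $\sup_{t\ge 0}\P_\pi(\tau^\pi_{\star,x}>t)=o(1)$ along the way, which for a nonnegative integer-valued time is equivalent to your conclusion; what your route buys is economy and an exact identity at $t=0$.
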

	In physical terms, \cref{pr:last} confirms once again the idea that, under the assumptions of the FVTL, the stationary and the quasi-stationary distributions coincide in the thermodynamic limit. 
\end{color}
\begin{proof}
 We start by rewriting the representation formula in \cref{eq:representation} in the case $\alpha=\pi$,
\begin{equation}\label{eq:representation2}
\P_{\pi}\Big(\tau_x>t\Big)=\lambda_x^t\pl\pi,\gamma_x\pr(1-\tilde s^{\tilde\pi}(t))+\P_{\pi}\Big( \tau^{\pi}_{\star,x}> t\Big).
\end{equation}
By the FVTL in \cref{fvtl} we know that \cref{eq:representation2} implies that, uniformly in $t\ge 0$,
\begin{equation}\label{eq:representation3}
\lambda_x^t\sim\lambda_x^t\pl\pi,\gamma_x\pr(1-\tilde s^{\tilde\pi}(t))+\P_{\pi}\Big( \tau^{\pi}_{\star,x}> t\Big).
\end{equation}
Thanks to \cref{co:pidotgamma} we can simplify the latter \cref{eq:representation3} and get
\begin{equation}\label{eq:representation4}
\sup_{t\ge 0}\left|\frac{\P_{\pi}\Big( \tau^{\pi}_{\star,x}> t\Big)}{\lambda^t_x}-\tilde s^{\tilde\pi}(t)\right|=o(1).
\end{equation}
We now show that the second term in the left hand side of \cref{eq:representation4} is $o(1)$ uniformly in $t\ge 0$, which implies that the same holds for the first term. In fact, by the monotonicity of the separation distance, the estimate
\begin{equation}\label{eq:representation8}
\sup_{t\ge 0}\tilde{s}^{\tilde{\pi}}(t)=o(1),
\end{equation}
is an immediate consequence of
\begin{equation}\label{eq:representation7}
\tilde{s}^{\tilde{\pi}}(0)=o(1).
\end{equation}
In order to prove \cref{eq:representation7},
 start by noting that the stationary distribution of the Doob's transform is given by
\begin{equation}\label{eq:representation5}
\nu_x(y)=\mu_x^\star(y)\gamma_x(y),
\end{equation}
while its starting distribution is, by \cref{starting},
\begin{equation}\label{eq:representation6}
\tilde{\pi}(y)=\frac{\pi(y)\gamma_x(y)}{\pl \pi,\gamma_x\pr}\sim \pi(y)\gamma_x(y),
\end{equation}
where in the latter approximation we used again \cref{co:pidotgamma}. Hence,
\begin{equation}
\text{sep}(\nu_x,\tilde\pi)=\max_{y\in\cX\setminus\{x\}}\Big[1-\frac{\nu_x(y)}{\tilde\pi(y)} \Big]\sim \max_{y\in\cX\setminus\{x\}}\Big[1-\frac{\mu^\star_x(y)}{\pi(y)} \Big].
\end{equation}
Therefore, to prove \cref{eq:representation7},
it suffices to show that
\begin{align}\label{eq:representation9}
\max_{y\in\cX\setminus\{x\}}\Big[1-\frac{\mu^\star_x(y)}{\pi(y)} \Big]=o(1).
\end{align}
Notice that for all $y\in\cX\setminus\{x\}$ it holds
\begin{align}
\mu_x^\star(y)=&\lambda_x^{-T}\sum_{z\neq x}\mu_x^\star(z)\big([P]_x\big)^T(z,y)\\
\le & \lambda_x^{-T}\sum_{z\neq x}\mu_x^\star(z)P^T(z,y)\\
\text{\cref{le:approx}}\Longrightarrow\quad=& \lambda_x^{-T}\sum_{z\neq x}\mu_x^\star(z)\pi(y)(1+o(1))\\
\sim& \lambda_x^{-T}\pi(y)(1+o(1))\\
\text{\cref{le:lambda-small}}\Longrightarrow\quad\sim&\pi(y).
\end{align}
The latter chain of asymptotic equalities shows that \cref{eq:representation9} holds, which in turn implies \cref{eq:representation8}. Therefore, thanks to \cref{eq:representation4}, we conclude that for every minimal CSQST $\tau_{\star,x}^\pi$
\begin{equation}
\sup_{t\ge 0}\P_\pi(\tau_{\star,x}^\pi>t)=o(1).
\end{equation}
\end{proof}
\end{color}
\section*{{Acknowledgments}} 
{ 
	M.Q. was partially supported by the GNAMPA-INdAM Project 2020 ``Random walks on random games'' and PRIN 2017 project ALGADIMAR. 
}
\bibliographystyle{plain}
\bibliography{bibFVTL}
 \end{document}